\newtheorem{theorem}{Theorem}[section]
\newtheorem{lemma}[theorem]{Lemma}
\newtheorem{proposition}[theorem]{Proposition}
\newtheorem{corollary}[theorem]{Corollary}
\theoremstyle{definition}
\newtheorem{definition}[theorem]{Definition}
\newtheorem{nota}[theorem]{Remark}
\numberwithin{equation}{section}
\newcommand\JM{Mierczy\'nski}
\newcommand\R{\ensuremath{\mathbb{R}}}
\newcommand\NN{\ensuremath{\mathbb{N}}}
\newcommand\PP{\ensuremath{\mathbb{P}}}
\newcommand\TT{\ensuremath{\mathbb{T}}}
\newcommand{\w}{\omega}
\newcommand{\OFP}{\ensuremath{(\Omega,\mathfrak{F},\PP)}}
\newcommand{\norm}[1]{\ensuremath{\lVert#1\rVert}}
\newcommand{\normL}[1]{\ensuremath{\lVert#1\rVert_{L}}}
\newcommand{\normC}[1]{\ensuremath{\lVert#1\rVert_{C}}}
\newcommand{\mlsps}{measurable linear skew\nobreakdash-\hspace{0pt}product
semidynamical system}
\newcommand{\mlspss}{measurable linear skew\nobreakdash-\hspace{0pt}product
semidynamical systems}
\DeclareMathOperator{\lnplus}{ln^{+}}
\begin{document}
\title[Lyapunov Exponents and Oseledets Decomposition]{Lyapunov Exponents and Oseledets Decomposition in Random Dynamical Systems Generated by Systems of Delay Differential Equations}
\author[J.~Mierczy\'nski]{Janusz Mierczy\'nski}
\address[Janusz Mierczy\'nski]{Faculty of Pure and Applied Mathematics,
Wroc{\l}aw University of Science and Technology,
Wybrze\.ze Wyspia\'nskiego 27, PL-50-370 Wroc{\l}aw, Poland.}
\email[Janusz Mierczy\'nski]{Janusz.Mierczynski@pwr.edu.pl}
\address[Sylvia Novo, Rafael Obaya]
{Departamento de Matem\'{a}tica Aplicada, Universidad de
Valladolid, Paseo del Cauce 59, 47011 Valladolid, Spain.}
\email[Sylvia Novo]{sylnov@wmatem.eis.uva.es}
\email[Rafael Obaya]{rafoba@wmatem.eis.uva.es}
\thanks{The first author is supported by the NCN grant Maestro 2013/08/A/ST1/00275 and the last two authors are partly supported by MEC (Spain)
under project MTM2015-66330-P and EU Marie-Sk\l odowska-Curie ITN Critical Transitions in
Complex Systems (H2020-MSCA-ITN-2014 643073 CRITICS)}
\author[S.~Novo]{Sylvia Novo}
\author[R.~Obaya]{Rafael Obaya}
\subjclass[2010]{Primary: 37H15, 37L55, 34K06. Secondary: 37A30, 60H25.}
\date{}
\begin{abstract}
Linear skew-product semidynamical systems generated by random systems of delay differential equations are considered, both on a space of continuous functions as~well as on a space of $p$-summable functions.  The main result states that in both cases, the Lyapunov exponents are identical, and that the Oseledets decompositions are related by natural embeddings.
\end{abstract}
\keywords{Random dynamical systems, Linear systems of delay differential equations, Lyapunov exponents, Oseledets decomposition}
\maketitle
\section{Introduction}\label{sect:introduction}
The theory of linear random skew-product semidynamical systems has become a powerful tool in the investigation of random linear parabolic PDEs of second order driven by a measurable dynamical system on a probability space.  In~particular, when the solution operator is compact (and that holds if the domain is bounded) then, assuming the summability of the coefficients of the PDE, we have an \emph{Oseledets decomposition}:  the separable Banach space decomposes into a countable direct sum of invariant measurable families of finite-dimensional vector subspaces which can be characterized as corresponding to solutions defined on the whole real line having given logarithmic growth rates (\emph{Lyapunov exponents}) both in the future and in the past (plus, possibly, an invariant measurable family corresponding to solutions having logarithmic growth rates $-\infty$).  See Lian and Lu's monograph~\cite{Lian-Lu}.
\par\smallskip
When we consider systems of linear random delay differential equations
\begin{equation*}
  z'(t) = A(\theta_t \omega)\, z(t) + B(\theta_t \omega) \, z(t - 1),
\end{equation*}
any ``natural" space on which we define a linear skew-product semidynamical system must contain (or, at~least, be equal to) a space consisting of functions defined on $[-1, 0]$ and taking values on $\R^N$.  In~general, there is no hope that the solution operator is compact.  But it is compact after some time, so this is not a big obstacle.
A more important thing is that, generally, the solution operator is not injective.  This makes it impossible to directly apply the results contained in~\cite{Lian-Lu}.
\par
\smallskip
It is natural to work in the framework of \emph{semi-invertible Oseledets theorems}:  the metric dynamical system on the base space is invertible, but the operators between fibers are not necessarily injective.
\par
\smallskip
For such systems, Doan proved in his dissertation~\cite{Doan} the existence of an \emph{Oseledets filtration}: an invariant measurable filtration by  finite-codimensional vector subspaces such that the solutions corresponding to the set difference of two subsequent subspaces have logarithmic rates of growth equal to a given Lyapunov exponent.  Starting from Doan's results,  Gonz\'alez-Tokman and Quas~\cite{GTQu} proved that there exists an Oseledets splitting, provided only that the fibers are separable Banach spaces and that the base space is a Lebesgue space. Indeed, in an earlier paper by Froyland \emph{et al.}~\cite{FrLlQu} an Oseledets splitting was obtained, but under an additional assumption that the base space is a Borel subset of a separable complete metric space with the $\sigma$-algebra of Borel sets and with a Borel probability measure.
\par
\smallskip
The above result should be considered sufficient for our purposes:  $C([-1,0], \R^N)$ appears to be the natural, at first sight, Banach space for which the solution operator satisfies all the axioms of a skew-product random semidynamical system.  Such a Banach space is separable, and there are no difficulties.
\par\smallskip
However, one should remember that we sometimes need to calculate (at~least, to estimate) the Lyapunov exponents.  As shown in  Calzada \emph{et al.}~\cite{COS}, one needs a Hilbert space, more precisely, the space $L^2([-1,0], \R^N, \mu_0)$, with $\mu_0 = \delta_0 + l$, where $l$ is the Lebesgue measure on $[-1, 0]$, is a natural choice here.  In such a case, one can use results from Gonz\'alez-Tokman and Quas~\cite{GTQu2}: an Oseledets decomposition is proved there for reflexive separable Banach spaces. In general, good geometric properties for the Banach spaces provide a more constructive version of the theory. \JM\ and Shen~\cite{MiShPart1} and \JM\ \emph{et al.}~\cite{MNO}  prove, under adequate dynamical assumptions, the existence of a principal Floquet subspace and a generalized exponential separation decomposition when the fiber is a separable Banach space with separable dual.
\par\smallskip
This has to do with the dual skew-product semidynamical systems.  In the case of ordinary differential equations, or parabolic partial differential equations of second order, such dual skew-product systems are generated by adjoint equations.  Then, the adjoint equation has the same properties as the original equation, and in many cases one needs only to prove ``one half" of a theorem (for example, the existence of an Oseledets filtration, whereas the other half can be given by applying the theorem to the skew-product system generated by the adjoint equation; for a similar approach see Section~3 in \JM\ and Shen~\cite{MiShPart3}).
\par\smallskip
However, this is not the case for delay differential equations.  To be sure, there exists a well-defined ``abstract" dual skew-product semidynamical system, but, at~least in the case of $C([-1, 0], \R^N)$, it is not generated by anything resembling an adjoint equation.  For generation of the dual system by an adjoint equation (sort~of), see Delfour and Mitter~\cite{DM75}.
\par
\medskip
The paper is organized as follows. Section~\ref{subsect:prelim} contains preliminaries and explains notions used throughout the rest of the sections.  In Section~\ref{subsect:Oseledets} a definition of an Oseledets decomposition for a \mlsps\ is given, and, under appropriate assumptions for our purposes, some theorems of existence are explained
\par\smallskip
Section 4 is devoted to showing that linear systems of delay differential equations generate \mlsps s  when we take as our fiber both $C([-1, 0], \R^n)$ and $\R^N \times L_p([-1, 0], \R^N)$, and some measurability and summability assumptions on the coefficients are considered.
\par\smallskip
The main results of the paper are contained in Section~\ref{sect:Lyapunov}.  It is shown that for both spaces, the Lyapunov exponents are the same, and that the Oseledets decomposition are related by natural embeddings. The importance of these results is that the geometrical methods of construction for the Oseledets subspaces, obtained in~\cite{GTQu2} for reflexive separable Banach spaces, as well as the estimates of Lyapunov exponents, can be applied on $\R^N \times L_p([-1, 0], \R^N)$ and then translated to $C([-1, 0], \R^N)$ by embedding.
\section{Preliminaries}\label{subsect:prelim}
Let  $(Y, d)$ be a metric space, $B(y; \epsilon)$ denotes the closed ball in $Y$ centered at $y \in Y$ with radius $\epsilon > 0$, and $\mathfrak{B}(Y)$  stands for the
$\sigma$\nobreakdash-\hspace{0pt}algebra of all Borel subsets of $Y$.
For a compact metric space $Z$ and a Banach space $X$, $C(Z, X)$ denotes the Banach space of continuous functions from $Z$ into $X$, with the supremum norm. For Banach spaces $X_1$, $X_2$, $\mathcal{L}(X_1, X_2)$ stands for the Banach space of bounded linear mappings from $X_1$ into $X_2$, endowed with the standard norm.  Instead~of $\mathcal{L}(X, X)$ we write $\mathcal{L}(X)$.
\par\smallskip
A {\em probability space\/} is a triple $\OFP$, where $\Omega$ is a nonempty set, $\mathfrak{F}$ is a $\sigma$\nobreakdash-\hspace{0pt}algebra of subsets of $\Omega$, and $\PP$ is a probability measure defined for all $F \in \mathfrak{F}$.  We always assume that the measure $\PP$ is complete.
\par\smallskip
A \emph{measurable dynamical system} on the probability space $\OFP$ is a $(\mathfrak{B}(\R) \otimes \mathfrak{F},\mathfrak{F})$\nobreakdash-\hspace{0pt}measurable mapping $\theta\colon\R\times \Omega\to \Omega$ such that
\begin{itemize}
\item
$\theta(0,\w)=\w$ for any $\w\in\Omega$,
\item
$\theta(t_1+t_2,w)=\theta(t_2,\theta(t_1,\w))$ for any $t_1$, $t_2\in\R$ and any $\w \in\Omega$.
\end{itemize}
We write $\theta(t,\w)$ as $\theta_t\w$. Also, we usually denote measurable dynamical systems by $(\OFP,(\theta_{t})_{t \in \R})$ or simply by $(\theta_{t})_{t \in \R}$.\par\smallskip
A \emph{metric dynamical system} is a measurable dynamical system $(\OFP,(\theta_{t})_{t \in \R})$
such that for each $t\in\R$  the mapping $\theta_t\colon \Omega\to\Omega$ is $\PP$-preserving (i.e., $\PP(\theta_t^{-1}(F))=\PP(F)$ for any $F\in\mathfrak{F}$ and $t\in\R$). It is said to be {\em ergodic\/} if for any invariant $F \in \mathfrak{F}$, either $\PP(F) = 1$ or $\PP(F) = 0$.
\par\smallskip
We write $\R^{+}$ for $[0, \infty)$.  By a {\em measurable linear skew-product semidynamical system or semiflow},
$\Phi = \allowbreak ((U_\w(t))_{\w \in \Omega, t \in \R^{+}}, \allowbreak (\theta_t)_{t\in\R})$ on a Banach space $X$ covering a metric dynamical system $(\theta_{t})_{t \in \R}$ we understand a $(\mathfrak{B}(\R^{+}) \otimes \mathfrak{F} \otimes \mathfrak{B}(X), \mathfrak{B}(X))$\nobreakdash-\hspace{0pt}measurable
mapping
\begin{equation*}
[\, \R^{+} \times \Omega \times X \ni (t,\w,u) \mapsto U_{\w}(t)\,u \in X \,]
\end{equation*}
satisfying
\begin{align}
&U_{\w}(0) = \mathrm{Id}_{X} \quad & \textrm{for each }\,\w  \in \Omega, \label{eq-identity}\\
&U_{\theta_{s}\w}(t) \circ U_{\w}(s)= U_{\w}(t+s) \qquad &\textrm{for each } \,\w \in \Omega \textrm{ and }  t,\,s \in \R^{+},\label{eq-cocycle}\\
&[\, X \ni u \mapsto U_{\w}(t)u \in X \,] \in \mathcal{L}(X) & \textrm{for each }\,\w \in \Omega \textrm{ and } t \in \R^{+}.\nonumber
\end{align}
Sometimes we write simply $\Phi = ((U_\w(t)), \allowbreak (\theta_t))$. Eq.~\eqref{eq-cocycle} is called the {\em cocycle property\/}.
\par\smallskip
\par
We use also the {\em semiprocess\/} notation:  for $\omega \in \Omega$ and $0 \le s \le t$,  $U_{\omega}(t, s)$ stands for $U_{\theta_{s}\omega}(t - s)$.  Then $U_{\omega}(t) = U_{\omega}(t, 0)$, and~\eqref{eq-cocycle} takes the form
\begin{equation*}
U_{\omega}(t, \tau) = U_{\omega}(t, s) \circ U_{\omega}(s, \tau) \qquad \textrm{ for each } \omega \in \Omega, \ 0 \le \tau \le s \le t.
\end{equation*}
\par
Given $\omega \in \Omega$ and $u \in X$,  the {\em positive semiorbit\/} passing through $(\omega, u)$ is the  $(\mathfrak{B}([0, \infty), \mathfrak{B}(X))$-measurable mapping
\begin{equation*}
\bigl[ \, [0, \infty) \ni t \mapsto U_{\omega}(t) u \in X \, \bigr].
\end{equation*}
\par
Given $\omega \in \Omega$ and $u \in X$, a {\em negative semiorbit\/} passing through $(\omega, u)$ is a $(\mathfrak{B}((-\infty,0], \mathfrak{B}(X))$-measurable mapping $\tilde{u} \colon (-\infty,0] \to X$
such that:
\begin{itemize}
\item
$\tilde{u}(0) = u$;
\item
$\tilde{u}(t + s)=U_{\theta_{s}\omega}(t) \,\tilde{u}(s)$ for any $s \le 0$,  $t \ge 0$  such that $s + t \le 0$.
\end{itemize}
For $(\omega, u)$ a negative semiorbit need not exist, and, if it exists, it need not be unique.
  A {\em full or entire orbit\/} passing through $(\omega, u)\in\Omega\times X$ is a $(\mathfrak{B}(\R, \mathfrak{B}(X))$-measurable mapping $\tilde{u} \colon \R \to X$
such that:
\begin{itemize}
\item
$\tilde{u}(0) = u$;
\item
$\tilde{u}(t + s)=U_{\theta_{s}\omega}(t) \,\tilde{u}(s)$ for any $s \in \R$,  $t \ge 0$.
\end{itemize}
Let $\Omega_0 \in \mathfrak{F}$.
A family $\{E(\omega)\}_{\omega \in \Omega_0}$ of $l$\nobreakdash-\hspace{0pt}dimensional vector subspaces of $X$ is {\em measurable\/} if there are $(\mathfrak{F},
\mathfrak{B}(X))$-measurable functions $v_1, \dots, v_l \colon \Omega_0 \to X$ such that $\{v_1(\omega), \dots, v_l(\omega)\}$ forms a basis of $E(\omega)$ for each $\omega \in \Omega_0$.
\par \smallskip
Let $\{E(\omega)\}_{\omega \in \Omega_0}$ be a family of $l$\nobreakdash-\hspace{0pt}dimensional vector subspaces of $X$, and let $\{F(\omega)\}_{\omega \in \Omega_0}$ be a family of $l$\nobreakdash-\hspace{0pt}codimensional closed vector subspaces of $X$, such that $E(\omega) \oplus F(\omega) = X$ for all $\omega \in \Omega_0$.  We define the {\em family of projections associated with the decomposition\/} $E(\omega) \oplus F(\omega) = X$ as $\{P(\omega)\}_{\omega \in \Omega_0}$, where $P(\omega)$ is the linear projection of $X$ onto $F(\omega)$ along $E(\omega)$, for each $\omega \in \Omega_0$.
\par\smallskip
The family of projections associated with the decomposition $E(\omega) \oplus F(\omega) = X$ is called {\em strongly measurable\/} if for each $u \in X$ the mapping $[\, \Omega_0 \ni \omega \mapsto P(\omega)u \in X \,]$ is $(\mathfrak{F}, \mathfrak{B}(X))$\nobreakdash-\hspace{0pt}measurable.
\par\smallskip
We say that the decomposition $E(\omega) \oplus F(\omega) = X$, with
$\{E(\omega)\}_{\omega \in \Omega_0}$ finite\nobreakdash-\hspace{0pt}dimensional, is {\em invariant\/} if $\Omega_0$ is invariant, $U_{\omega}(t)E(\omega) = E(\theta_{t}\omega)$
and $U_{\omega}(t)F(\omega) \subset F(\theta_{t}\omega)$, for each $t \in \TT^{+}$.
\par\smallskip
A strongly measurable family of projections associated with the invariant decomposition $E(\omega) \oplus F(\omega) = X$ is referred to as {\em tempered\/} if
\begin{equation*}
\lim\limits_{t \to \pm\infty} \frac{\ln{\norm{P(\theta_{t}\omega)}}}{t} = 0 \qquad \PP\text{-a.e. on }\Omega_0.
\end{equation*}
\section{Oseledets decomposition}\label{subsect:Oseledets}
Let $\Phi = ((U_\w(t)), \allowbreak (\theta_t))$ be a \mlsps\ covering an ergodic metric dynamical system $(\OFP,(\theta_{t})_{t \in \R})$ with $\PP$ complete. We assume throughout the present section that
\begin{enumerate}[label=(\textbf{O\arabic*}),leftmargin=27pt]
\item\label{O1}  the functions
\begin{equation*}
\bigl[ \,\omega \mapsto \sup\limits_{0 \le s \le 1} {\lnplus{\norm{U_{\omega}(s)}}} ) \, \bigr]\text{ and }
\bigl[ \, \omega \mapsto \sup\limits_{0 \le s \le 1}
{\lnplus{\norm{U_{\theta_{s}\omega}(1-s)}}}  \, \bigr]
\end{equation*}
belong to $L_1\OFP$.
\end{enumerate}
\par\smallskip
Then it follows from the Kingman subadditive ergodic theorem that there exists $\lambda_{\mathrm{top}} \in [-\infty, \infty)$ such that
\begin{equation*}
\lim\limits_{t \to \infty} \frac{\ln{\norm{U_{\omega}(t)}}}{t} = \lambda_{\mathrm{top}}
\end{equation*}
for $\PP$-a.e.\ $\omega \in \Omega$, which is referred to as the {\em top Lyapunov exponent\/} of $\Phi$.\par\smallskip
We will also assume that
\begin{enumerate}[label=(\textbf{O2}),leftmargin=27pt]
\item\label{O2} $\lambda_{\mathrm{top}} > -\infty$.
\end{enumerate}
\begin{definition}\label{defi:Oseledets} $\Phi$ admits an {\em Oseledets decomposition\/} if there exists an invariant subset $\Omega_0 \subset \Omega$, $\PP(\Omega_0) = 1$, with the property that one of the following mutually exclusive cases, \textup{(I)} or \textup{(II)}, holds:\par\smallskip
\begin{enumerate}[leftmargin=20pt]
\item[\textup{(I)}]
There are $k$  real numbers $\lambda_1 = \lambda_{\mathrm{top}} > \cdots > \lambda_k$, called  the {\em Lyapunov exponents\/} for $\Phi$, $k$ measurable families $\{E_1(\omega)\}_{\omega\in\Omega_0}$, \ldots, $\{E_k(\omega)\}_{\omega \in \Omega_0}$ of finite dimensional vector subspaces, and a family $\{F_{\infty}(\omega)\}_{\omega\in \Omega_0}$ of closed vector subspaces of finite codimension such that\par\smallskip
\begin{itemize}
\item[\textup{(a)}] for $i = 1,\dots,k$, any $\omega\in\Omega_0$ and $t \ge 0$
\[U_{\omega}(t) E_{i}(\omega) = E_{i}(\theta_t\omega )  \quad\text{and}\quad U_{\omega}(t) F_{\infty}(\omega) \subset F_{\infty}(\theta_t\omega)\,;\]
\item[\textup{(b)}]
$E_1(\omega) \oplus \ldots \oplus E_k(\omega) \oplus F_{\infty}(\omega) = X$ for any $\omega \in \Omega_0$; we write
\[\hspace{1.8cm}  F_i(\omega) := \bigoplus\limits_{j=i+1}^{k} E_{j}(\omega) \oplus F_{\infty}(\omega) \text{ for } i = 1, \dots, k - 1,  \text{ and } F_0(\omega) : = X\,.\] In particular, $F_i(\omega) = E_{i+1}(\omega) \oplus F_{i+1}(\omega)$ for $i = 0, 1, \dots, k - 2\,$;\vspace{.1cm}
\item[\textup{(c)}]
for $i = 1,\dots,k-1$, the families of projections associated with the de\-compositions $\Bigl(\bigoplus\limits_{j=1}^{i} E_{j}(\omega)\Bigr) \oplus F_i(\omega) = X$ and $\Bigl(\bigoplus\limits_{j=1}^{k} E_{j}(\omega)\Bigr) \oplus F_{\infty}(\omega) = X$ are strongly measurable and tempered;\vspace{.1cm}
\item[\textup{(d)}] for $i = 1,\dots, k\,$, any $\omega \in \Omega_0$ and any nonzero $u \in E_i(\omega)$
\[\displaystyle
\lim_{t\to \infty } \frac{\ln{\norm{U_{\omega}(t)|_{E_i(\omega)}}} }{t} = \lim_{t\to \infty} \frac{\ln{\norm{U_{\omega}(t)\,u}}}{t}  = \lambda_i\,;
\]
\item[\textup{(e)}] for $i = 1,\dots, k\,$, any $\omega \in \Omega_0$ and any $u \in F_{i - 1}(\omega) \setminus F_{i}(\omega)$
\[\displaystyle
\lim_{t\to\infty} \frac{\ln{\norm{U_\omega(t)\,u}}}{t}  = {\lambda}_i\,;
\]
\item[\textup{(f)}] for any $\omega \in \Omega_0$ and any $u \in F_{k - 1}(\omega) \setminus F_{\infty}(\omega)$
\[
\lim_{t\to\infty} \frac{\ln{\norm{U_\omega(t)\,u}}}{t} = {\lambda}_k\,;
\]
\item[\textup{(g)}] for $i = 1,\dots, k$ and any $\omega \in \Omega_0$, a nonzero $u \in F_{i - 1}(\omega)$ belongs to $E_i(\omega)$ if and only if there exists a negative semiorbit $\tilde{u} \colon (-\infty ,0] \to X$ passing through $(\omega, u)$ such that
\begin{equation*}
\lim\limits_{s \to -\infty} \frac{\ln{\norm{\tilde{u}(s)}}}{s}  = \lambda_{i};
\end{equation*}
\item[\textup{(h)}] for any $\omega \in \Omega_0$
\[
\lim\limits_{t\to\infty} \frac{\ln{\norm{U_{\omega}(t)|_{F_{\infty}(\omega)}}} }{t} = -\infty\,.
\]
\end{itemize}
In this case, $\;\{F_1(\omega)\}_{\omega \in \Omega_0}, \ldots, \{F_{k-1}(\omega)\}_{\omega \in \Omega_0}, \{F_{\infty}(\omega)\}_{\omega \in \Omega_0}\;$ is called the {\em Oseledets filtration\/} for $\Phi$.\vspace{.2cm}
\item[\textup{(II)}]
There are a decreasing sequence of real numbers $\lambda_1 = \lambda_{\mathrm{top}}> \cdots > \lambda_i > \lambda_{i+1} > \cdots {}$ with limit $-\infty$, called  the {\em Lyapunov exponents\/} for $\Phi$, countably many measurable families $\{E_i(\omega)\}_{\omega \in
\Omega_0}$, $i\in\NN$, of finite dimensional vector subspaces, and countably many families $\{F_i(\omega)\}_{\omega \in \Omega_0}$, $i\in\NN$,
 of closed vector subspaces of finite codimensions, called the {\em Oseledets filtration\/} for $\Phi$,  such that\vspace{.2cm}
\begin{itemize}
\item[\textup{(a)}] for $i\in\NN$, any $\omega \in \Omega_0$ and $t \ge 0$
\[U_{\omega}(t) E_{i}(\omega) = E_{i}(\theta_t\omega)\quad \text{and}\quad U_{\omega}(t) F_{i}(\omega) \subset F_{i}(\theta_t\omega)\,;\]
\item[\textup{(b)}] for $i\in\NN$ and any $\omega \in \Omega_0$
\[\hspace{2cm} E_1(\omega) \oplus \ldots \oplus E_i(\omega) \oplus F_i(\omega) = X \;\text{ and }\; F_i(\omega) = E_{i+1}(\omega) \oplus F_{i+1}(\omega)\,;\]
\item[\textup{(c)}] for $i\in\NN$,
the families of projections associated with the decompositions
$\Bigl(\bigoplus\limits_{j=1}^{i} E_{j}(\omega)\Bigr) \oplus F_{i}(\omega) = X$ are strongly measurable and tempered;\vspace{.1cm}
\item[\textup{(d)}] for $i\in\NN$,  any $\omega \in \Omega_0$ and any nonzero $u \in E_i(\omega)$
\[
\lim_{t \to \infty} \frac{\ln{\norm{U_{\omega}(t)|_{E_i(\omega)}}}}{t}  = \lim_{t \to \infty} \frac{\ln{\norm{U_{\omega}(t)\,u}}}{t}  = \lambda_i\,;\]
\item[\textup{(e)}] for $i\in\NN$,  any $\omega \in \Omega_0$ and any $u \in F_{i-1}(\omega) \setminus F_{i}(\omega)$
\[
\lim\limits_{t\to\infty} \frac{\ln{\norm{U_\omega(t)\,u}}}{t}  = {\lambda}_i
\]
where $F_{0}(\omega) := X$;\vspace{.1cm}
\item[\textup{(f)}] for $i\in\NN$ and any $\omega \in \Omega_0$, a nonzero $u \in F_{i - 1}(\omega)$ belongs to $E_i(\omega)$ if and only if there exists a negative semiorbit $\tilde{u} \colon (-\infty ,0] \to X$ passing through $(\omega, u)$ such that 
\begin{equation*}
\lim\limits_{s \to -\infty} \frac{\ln{\norm{\tilde{u}(s)}}}{s}  = \lambda_{i};
\end{equation*}
\item[\textup{(g)}] for $i\in\NN$ and any $\omega \in \Omega_0$
\[
\lim\limits_{t\to\infty} \frac{\ln{\norm{U_{\omega}(t)|_{F_i(\omega)}}}}{t}  = \lambda_{i+1}\,;\]
\item[\textup{(h)}] for any $\omega \in \Omega_0$ and a nonzero $u \in \bigcap\limits_{j = 1}^{\infty} F_j(\omega) =: F_{\infty}(\omega)$
\[
\lim_{t\to\infty} \frac{\ln{\norm{U_{\omega}(t)\,u}}}{t}  = - \infty
\,.\]
\end{itemize}
\end{enumerate}
\end{definition}
\par
As a consequence of the existence of an Oseledets decomposition, we easily deduce the following properties.
\begin{proposition}
\label{prop:limit-everywhere}
Assume that $\Phi$ admits an Oseledets decomposition. Then for each $\omega \in \Omega_0$ and each nonzero $u \in X$ the limit
\begin{equation}
\label{eq:true-limit}
\lim_{t \to \infty} \frac{\ln{\norm{U_{\omega}(t)\,u}}}{t}
\end{equation}
exists and equals some Lyapunov exponent $\lambda_i$ or $- \infty$.
\end{proposition}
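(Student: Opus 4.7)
The plan is to unpack Definition~\ref{defi:Oseledets} and perform a case analysis on the position of $u$ in the Oseledets filtration. Fix $\omega\in\Omega_0$ and a nonzero $u\in X$. Since cases~(I) and~(II) of Definition~\ref{defi:Oseledets} are mutually exclusive, I would treat them in turn.

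In case~(I), the filtration is $X = F_0(\omega) \supset F_1(\omega) \supset \cdots \supset F_{k-1}(\omega) \supset F_{\infty}(\omega)$. I would argue that either $u \in F_{\infty}(\omega)$, or there is a smallest $i\in\{1,\ldots,k\}$ with $u \notin F_i(\omega)$ (writing $F_k(\omega) := F_{\infty}(\omega)$), so that $u\in F_{i-1}(\omega)\setminus F_i(\omega)$. In the second alternative, items~(e) and~(f) of the definition immediately give the limit $\lambda_i$. In the first alternative, item~(h) gives $\lim_{t\to\infty} t^{-1}\ln{\norm{U_\omega(t)|_{F_{\infty}(\omega)}}} = -\infty$, and combining this with the trivial bound $\norm{U_\omega(t)u} \le \norm{U_\omega(t)|_{F_{\infty}(\omega)}}\,\norm{u}$ one obtains $\limsup_{t\to\infty} t^{-1}\ln\norm{U_\omega(t)u} \le -M$ for every $M>0$, which forces the limit to exist and equal $-\infty$.

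In case~(II) the argument is entirely parallel, now with the countable nested family $X = F_0(\omega) \supset F_1(\omega) \supset F_2(\omega) \supset \cdots$ and $F_{\infty}(\omega) = \bigcap_{j\ge 1} F_j(\omega)$. Either $u\in F_j(\omega)$ for every $j\in\NN$, hence $u\in F_{\infty}(\omega)$, and item~(h) of case~(II) gives the limit $-\infty$ directly (it is already formulated pointwise in $u$); or there is a smallest $i\ge 1$ with $u\notin F_i(\omega)$, whence $u\in F_{i-1}(\omega)\setminus F_i(\omega)$ and item~(e) of case~(II) delivers the limit $\lambda_i$.

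I do not expect any genuine obstacle: the result is essentially a tautological consequence of the definition once a nonzero $u$ is located in the filtration. The only point requiring a moment's care is the $F_{\infty}$ subcase of regime~(I), where item~(h) of case~(I) controls the restricted operator norm on $F_{\infty}(\omega)$ rather than the growth of individual orbits, so one must bridge the gap via the trivial operator-norm estimate sketched above before concluding that $t^{-1}\ln\norm{U_\omega(t)u}\to -\infty$.
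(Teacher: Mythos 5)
Your proof is correct and follows essentially the same route as the paper: locate the nonzero $u$ in the Oseledets filtration and invoke items (e), (f), (h) of Definition~\ref{defi:Oseledets}. The only (harmless) cosmetic differences are that the paper identifies the relevant index via the smallest $j$ with nonzero $E_j(\omega)$-coordinate in the direct-sum decomposition rather than via the smallest $i$ with $u\notin F_i(\omega)$, and that you spell out the operator-norm bound bridging item (I)(h) to individual orbits, a step the paper leaves implicit.
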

\begin{proof}
Assume case (I) and fix $\omega \in \Omega_0$ and a nonzero $u\in X$.  If $u$ belongs to $F_{\infty}(\omega)$ then~\eqref{eq:true-limit} equals $- \infty$.   If $u$ does not belong to $F_{\infty}(\omega)$ then there is a $j \in \{1, \ldots, k \}$ such that in the decomposition $E_1(\omega) \oplus \ldots \oplus E_k(\omega) \oplus F_{\infty}(\omega) = X$ the $E_j(\omega)$-coordinate of $u$ is nonzero.  Now, take $i$ to be the smallest such a $j$.  If $i \in \{1, \ldots, k - 1 \}$ one has $u \in F_{i - 1}(\omega) \setminus F_{i}(\omega)$, consequently \eqref{eq:true-limit} equals $\lambda_{i}$.  If $i = k$ then $u \in F_{k-1}(\omega) \setminus F_{\infty}(\omega)$ and \eqref{eq:true-limit} is $\lambda_{k}$.
\par\smallskip
Assume case (II) and fix $\omega \in \Omega_0$ and a nonzero $u \in X$.   If in each decomposition $E_1(\omega) \oplus \ldots \oplus E_i(\omega) \oplus F_{i}(\omega) = X$ the $E_j(\omega)$-coordinates, $j \in \{1, \ldots, i\}$, of $u$ are zero, then $u \in F_{\infty}(\omega)$, so \eqref{eq:true-limit} is $- \infty$.  Otherwise there is an $i \in \NN$ such that in the decomposition $E_1(\omega) \oplus \ldots \oplus E_i(\omega) \oplus F_{i}(\omega) = X$ the $E_j(\omega)$-coordinates, $j \in \{1, \ldots, i -1\}$, of $u$ are zero and the $E_i(\omega)$-coordinate is nonzero, then $u \in F_{i - 1}(\omega) \setminus F_{i}(\omega)$ and \eqref{eq:true-limit} equals $\lambda_{i}$.
\end{proof}
\begin{proposition}\label{prop:full-orbit}
Assume that $\Phi$ admits an Oseledets decomposition.  Then for each $\omega \in \Omega_0$ and each $i = 1,\dots, k$ in case \textup{(I)}, or each $i \in\NN$ in case \textup{(II)}, a nonzero $u \in X$ belongs to $E_i(\omega)$ if and only if there exists a full orbit $\widetilde{u} \colon \R \to X$ passing through $(\omega, u)$ such that
\begin{equation*}
\lim\limits_{t \to \pm\infty} \frac{\ln{\norm{\widetilde{u}(t)}}}{t}  = \lambda_{i}\,.
\end{equation*}
\end{proposition}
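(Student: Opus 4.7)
The plan is to prove both implications by carefully exploiting the characterizations already packaged into Definition~\ref{defi:Oseledets}, using property~(g) in case~(I) (resp.~(f) in case~(II)) to produce/recognize negative semiorbits, and Proposition~\ref{prop:limit-everywhere} to translate forward growth rates into filtration membership.

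For the forward implication ($u \in E_i(\omega) \setminus \{0\} \Rightarrow$ full orbit with two-sided rate $\lambda_i$), I would split the construction of $\widetilde{u}$ at $t=0$. On the positive side, simply set $\widetilde{u}(t) := U_{\omega}(t)u$ for $t \ge 0$; by invariance $\widetilde{u}(t) \in E_i(\theta_t\omega)$, and property~(d) yields $\lim_{t\to\infty}\ln\norm{\widetilde{u}(t)}/t = \lambda_i$. On the negative side, apply property~(g) (or~(f) in case~(II)) to produce a negative semiorbit $\tilde u^- : (-\infty,0] \to X$ through $(\omega,u)$ with $\lim_{s\to-\infty}\ln\norm{\tilde u^-(s)}/s = \lambda_i$, and paste $\widetilde{u}(t):=\tilde u^-(t)$ for $t\le 0$. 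The only step requiring verification is that the pasted map is a genuine full orbit: checking $\widetilde{u}(t+s) = U_{\theta_s\omega}(t)\widetilde{u}(s)$ for $s\in\R$, $t\ge 0$ is immediate by the cocycle property when $s\ge 0$ and by the negative-semiorbit identity when $s+t\le 0$; the remaining case $s<0<s+t$ reduces to the cocycle after using $U_{\theta_s\omega}(-s)\tilde u^-(s) = u$.

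For the reverse implication (full orbit with two-sided rate $\lambda_i \Rightarrow u \in E_i(\omega)$), I would argue as follows. The restriction $\widetilde{u}|_{[0,\infty)}$ coincides with $t \mapsto U_\omega(t)u$ (full orbit property with $s=0$), so the forward hypothesis gives $\lim_{t\to\infty}\ln\norm{U_\omega(t)u}/t = \lambda_i$. The argument inside the proof of Proposition~\ref{prop:limit-everywhere} classifies $u$ precisely by its forward rate: the value $\lambda_i$ forces $u \in F_{i-1}(\omega) \setminus F_i(\omega)$, so in particular $u \in F_{i-1}(\omega)$. Meanwhile $\widetilde{u}|_{(-\infty,0]}$ is a negative semiorbit through $(\omega,u)$ whose backward rate is $\lambda_i$. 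The ``if'' direction of property~(g) (resp.~(f)) applied to this negative semiorbit then yields $u \in E_i(\omega)$.

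The main obstacle I anticipate is essentially a bookkeeping one: handling the two cases~(I) and~(II) uniformly while being careful in case~(I) about the boundary index $i=k$, where $F_k$ is replaced by $F_\infty$ and property~(f) instead of~(e) is invoked to identify the filtration stratum via the forward rate. No analytical input beyond items~(a), (d), (e), (f)/(g) of Definition~\ref{defi:Oseledets} and Proposition~\ref{prop:limit-everywhere} is needed; the content lies entirely in correctly gluing a negative semiorbit to the forward trajectory and checking the full-orbit identity on the overlap.
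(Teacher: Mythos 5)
Your proposal is correct and follows essentially the same route as the paper: necessity by combining the forward rate on $E_i(\omega)$ with the negative semiorbit supplied by (I)(g)/(II)(f) and gluing, and sufficiency by first locating $u$ in $F_{i-1}(\omega)$ via its forward rate (properties (e)/(f), i.e.\ the classification in Proposition~\ref{prop:limit-everywhere}) and then invoking the ``if'' direction of (I)(g)/(II)(f) on the restriction of the full orbit to $(-\infty,0]$. Your explicit verification of the pasted full-orbit identity in the case $s<0<s+t$ is a detail the paper leaves implicit, but it is the same argument.
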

\begin{proof}
The necessity is a consequence of (e) or (f), and (g) for the case (I), and  (e) and (f) for the case (II).
Assume now that for some nonzero $u \in X$ there exists a full orbit with the above properties.  It follows from (e) or (f) in the first case,  or (e) in the second one, that $u$ belongs to $F_{i-1}(\omega)$ for some $i = 1, \dots, k$, or some $i \in\NN$ respectively.  Now we need to apply (I)(g) or (II)(f) to finish the proof.
\end{proof}
It follows from the results in~\cite{Lian-Lu} that if $X$ is separable and  $U_{\omega}(1)$ is an injective and compact operator for all $\omega \in \Omega$ then there exists an Oseledets decomposition.
\par\smallskip
The papers~\cite{FrLlQu} and~\cite{GTQu} give a proof of the Oseledets decomposition for semi-invertible discrete ergodic transformations. We want to state a version of these results valid for continuous linear skew-product semidynamical systems. We omit the details of the proof that follow standard arguments from the theory mentioned above. The conclusions of this theorem will be relevant in the applications to the theory of delay differential equations.
\begin{theorem}
\label{thm-Oseledets}
Assume $\Phi$ is a measurable linear skew-product semidynamical system satisfying the following:
\begin{enumerate}
\item[\textup{(a)}]
$\Omega$ is a Lebesgue space,
\item[\textup{(b)}]
$X$ is a separable Banach space, and
\item[\textup{(c)}]
$U_{\omega}(1)$ is a compact operator for all $\omega \in \Omega$.
\end{enumerate}
Then $\Phi$ admits an Oseledets decomposition.
\end{theorem}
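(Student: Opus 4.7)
The plan is to reduce the theorem to the discrete-time semi-invertible Oseledets theorem applied to the time-one map and then promote the resulting decomposition to all real times $t\ge 0$. Setting $T_\omega:=U_\omega(1)$ yields a cocycle over the invertible ergodic transformation $\theta_1\colon\Omega\to\Omega$ on the Lebesgue space $\Omega$, with fibre the separable Banach space $X$; by hypothesis~(c) each $T_\omega$ is compact, and by~\ref{O1} the observable $\lnplus{\norm{T_\omega}}$ lies in $L_1\OFP$. Under these assumptions, the results of Froyland, Lloyd and Quas~\cite{FrLlQu} and Gonz\'alez-Tokman and Quas~\cite{GTQu} produce an invariant full-measure subset $\Omega_0\subset\Omega$, Lyapunov exponents $\lambda_1>\lambda_2>\cdots$ (either a finite list terminating with a residual Oseledets tail $F_\infty$ on which the exponent is $-\infty$, or a countable sequence $\lambda_i\to-\infty$), and an invariant, strongly measurable, tempered splitting of $X$ at each $\omega\in\Omega_0$ satisfying the discrete-time analogues of items (a)--(h) in Definition~\ref{defi:Oseledets}.

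The next step is to upgrade this data from integer to continuous time while keeping the very same families $E_i(\omega)$, $F_i(\omega)$, $F_\infty(\omega)$. For any fixed $t\ge 0$ and any $u\in E_i(\omega)$, the cocycle identity $U_\omega(n+t)\,u=U_{\theta_t\omega}(n)\bigl(U_\omega(t)\,u\bigr)$ together with~\ref{O1} and Birkhoff's ergodic theorem forces $U_\omega(t)\,u$ to have the same forward discrete Lyapunov exponent $\lambda_i$ at base point $\theta_t\omega$; by the discrete versions of (d) and (e) this places $U_\omega(t)\,u$ in $F_{i-1}(\theta_t\omega)\setminus F_i(\theta_t\omega)$. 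Composing a negative semiorbit of $u$ with $U_{\theta_s\omega}$ for $s\in[-t,0]$ provides a negative semiorbit of $U_\omega(t)\,u$ with backward rate $\lambda_i$, so by the discrete (g) one concludes $U_\omega(t)\,u\in E_i(\theta_t\omega)$. Since $\dim E_i(\cdot)$ is $\PP$-a.e.\ constant by ergodicity of $\theta_1$ and $U_\omega(t)|_{E_i(\omega)}$ is injective (no nonzero vector on $E_i$ can have exponent $-\infty$), the equality $U_\omega(t)\,E_i(\omega)=E_i(\theta_t\omega)$ follows. The inclusions $U_\omega(t)\,F_i(\omega)\subset F_i(\theta_t\omega)$ and $U_\omega(t)\,F_\infty(\omega)\subset F_\infty(\theta_t\omega)$ are obtained similarly from the growth characterisations of these subspaces.

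It remains to transfer strong measurability, temperedness, and the exponent identities of Definition~\ref{defi:Oseledets} from integer to real times. The projection families are unchanged, so strong measurability is inherited. For temperedness, writing $t=n+s$ with $0\le s<1$ one has
\begin{equation*}
\bigl|\ln\norm{P(\theta_t\omega)}-\ln\norm{P(\theta_n\omega)}\bigr|\le 2\sup_{0\le\sigma\le 1}\lnplus{\norm{U_{\theta_n\omega}(\sigma)}}+2\sup_{0\le\sigma\le 1}\lnplus{\norm{U_{\theta_{n+\sigma}\omega}(1-\sigma)}},
\end{equation*}
whose right-hand side is integrable by~\ref{O1} and hence averages to zero along $\PP$-a.e.\ orbit by Birkhoff's theorem. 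An analogous interpolation between $t$ and $\intpart{t}$ transfers the Lyapunov identities (d)--(h), and continuous negative semiorbits are reconstructed from discrete ones via $\widetilde{u}(s):=U_{\theta_{\intpart{s}}\omega}(s-\intpart{s})\,\widetilde{u}_{\mathrm{disc}}(\intpart{s})$. The principal obstacle, and the reason behind the inner suprema in~\ref{O1}, is the systematic control of these fractional-time error terms: once the $L_1$ estimates of~\ref{O1} are in hand, verification of the full list (I)(a)--(h) or (II)(a)--(h) reduces to a routine but lengthy book-keeping exercise, carried out along the lines of Section~3 of~\cite{MiShPart3}.
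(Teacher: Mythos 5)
Your proposal follows essentially the same route as the paper: the paper's argument is itself only an indication of proof, invoking the discrete-time semi-invertible Oseledets theorem of Gonz\'alez-Tokman and Quas (Theorem~A of~\cite{GTQu}) for the time-one cocycle over $\theta_1$ and then passing to continuous time along the lines of the proof of Theorem~3.3 in~\cite{Lian-Lu}, which are exactly the two steps you carry out. The one caveat is that your displayed bound for $\ln\norm{P(\theta_t\omega)}$ in terms of $\ln\norm{P(\theta_{\intpart{t}}\omega)}$ does not follow directly from the intertwining relation $P(\theta_t\omega)\,U_{\theta_{\intpart{t}}\omega}(t-\intpart{t})=U_{\theta_{\intpart{t}}\omega}(t-\intpart{t})\,P(\theta_{\intpart{t}}\omega)$, since the fibre operators are neither injective nor surjective here (one has to argue via the finite-dimensional complement $\Id_X-P$ and the invertibility of the cocycle restricted to $\bigoplus_j E_j$, as is done in the cited references), together with the related point that the discrete theorem only furnishes the families on a $\theta_1$-invariant full-measure set, so the subspaces at $\theta_t\omega$ for fractional $t$ must first be \emph{defined} (e.g.\ as images under $U_\omega(t)$) before your invariance identities can be stated --- both are details the paper itself delegates to~\cite{Lian-Lu}.
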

\begin{proof}[Indication of proof]
The existence of a discrete-time Oseledets decomposition for systems satisfying (a), (b) and (c) was proved in~\cite[Theorem.~A]{GTQu}.  To pass to the continuous-time decomposition we proceed along the lines of the proof of \cite[Theorem.~3.3]{Lian-Lu}.
\end{proof}
\begin{nota} Analogously, from a discrete-time result in~\cite[Corollary~17]{GTQu2}, the existence of an~Oseledets decomposition for $\Phi$ is obtained when $X$ is a separable and reflexive Banach space and (a) is not assumed. The importance of this approach is its constructive nature. More precisely, a way of approximating the Oseledets splitting is provided, which is important in applications.
\end{nota}
\section{Semiflows generated by linear random delay differential equations}\label{sect:semiflows}
This section is devoted to show the applications of the previous theory to random dynamical systems generated by systems of linear random delay differential equations of the form
\begin{equation}\label{main-delay-eq}
z'(t) = A(\theta_{t}\omega) \,z(t) + B(\theta_{t}\omega) \,z(t-1),
\end{equation}
where $z(t) \in \R^N$, $N \ge 2$,  $A(\omega)$, $B(\omega)$ are  $N\times N$ real matrices:
\begin{equation*}
A(\omega) =
\left(\begin{smallmatrix}
a_{11}(\omega)&a_{12}(\omega)&\cdots&a_{1N}(\omega) \\
a_{21}(\omega)&a_{22}(\omega)&\cdots&a_{2N}(\omega) \\
\vdots & \vdots & \ddots & \vdots \\
a_{N1}(\omega)&a_{N2}(\omega)&\cdots&a_{NN}(\omega)
\end{smallmatrix}\right)\,,
\quad
B(\omega) =
\left(\begin{smallmatrix}
b_{11}(\omega)&b_{12}(\omega)&\cdots&b_{1N}(\omega) \\
b_{21}(\omega)&b_{22}(\omega)&\cdots&b_{2N}(\omega) \\
\vdots & \vdots & \ddots & \vdots \\
b_{N1}(\omega)&b_{N2}(\omega)&\cdots&b_{NN}(\omega)
\end{smallmatrix}\right)\,,
\end{equation*}
and $(\OFP,(\theta_t)_{t \in \R})$ is an ergodic metric dynamical system, with $\PP$ complete.\par\smallskip
From now on, the Euclidean norm on $\R^N$ will be denote by $\norm{\cdot}$, $\R^{N \times N}$ will stand for the algebra of $N \times N$ real matrices with the operator or matricial norm induced by the Euclidean norm, i.e., $\norm{A} := \sup\{\norm{A\,u}\mid \norm{u} \le 1\}$, for any $A \in \R^{N \times N}$.
\par\smallskip
We denote by $C$ the Banach space $C([-1, 0], \R^N)$ of continuous $\R^N$-valued functions defined on $[-1,0]$, with the supremum norm (denoted by $\normC{\cdot}$).
\par\smallskip
For $1<p<\infty$, let $L=\R^N\times L_p([-1,0],\R^N)$ be the separable Banach space with the norm
\begin{equation}\label{normL}
\normL{u}=\norm{u_1}+\norm{u_2}_p=\norm{u_1}+\left(\int_{-1}^0 \norm{u_2(s)}^p\,ds\right)^{\!\!1/p}
\end{equation}
for any $u=(u_1,u_2)$ with $u_1\in\R^N$ and $u_2\in L_p([-1,0],\R)$.\par\smallskip
We denote by $J$ the linear mapping from $C$ to $L$
\begin{equation}\label{defiJ}
\begin{array}{cccc}
J\colon & C & \longrightarrow & L\\
&  u &  \mapsto & (u(0),u) \,,
\end{array}
\end{equation}
which belongs to $\mathcal{L}(C,L)$ and $\norm{J}=2$.
In the following, $p$ will be fixed and  $q \in (0, \infty)$ is such that $1/p + 1/q = 1$.
\par\smallskip
Now we introduce the assumptions on the coefficients of the family~\eqref{main-delay-eq}:
\begin{enumerate}[label=(\textbf{S\arabic*}),leftmargin=24pt]
\item\label{S1} (Measurability) $A, B \colon \Omega \to \R^{N \times N}$ are $(\mathfrak{F}, \mathfrak{B}(\R^{N \times N}))$\nobreakdash-\hspace{0pt}measurable.\smallskip
\item\label{S2} (Summability) The $(\mathfrak{F}, \mathfrak{B}(\R))$-measurable functions $a$, $b\colon \Omega \to \R$ defined as $a(\w):=\norm{A(\w)}$ and $b(\w):=\norm{B(\w)}$ have the properties:
\begin{align*}
& \bigl[ \, \Omega \ni \omega \mapsto a(\w) \in \R \, \bigr] \in L_1\OFP, \text{ and}
\\
& \Bigl[ \, \Omega \ni \omega \mapsto \lnplus{\int_{0}^{1}\lvert b(\theta_{r}\w) \rvert^{q} \, dr} \in \R \, \Bigr] \in L_1\OFP.
\end{align*}
\end{enumerate}
\begin{nota}\label{rm:S1}
The following is sufficient for the fulfillment of the second condition in~\ref{S2}:
\begin{equation*}
\bigl[ \, \Omega \ni \omega \mapsto b(\w) \in \R \, \bigr] \in L_q \OFP.
\end{equation*}
Indeed, since $\lvert b \rvert^{q} \in L_1\OFP$ and the measure $\PP$ is invariant, for any $t\in\R$
\begin{equation*}
\int_\Omega \lvert b(\theta_t\omega')\rvert^{q} \,d\PP(\omega') = \int_\Omega \lvert b(\omega')\rvert^{q} \,d\PP(\omega') \,
\end{equation*}
and an application of Fubini's theorem gives that the map
\begin{equation*}
\Bigl[ \, \Omega \ni \omega \mapsto \int_{0}^{1}\lvert b(\theta_{r}\w) \rvert^{q} \, dr \in \R \, \Bigr]
\end{equation*}
belongs to $L_1\OFP$, from which the required statement follows immediately.
\end{nota}
\subsection{Linear skew-product semiflows on $C$ and $L$}\label{subsect:semiflow-C-L}
Before proceeding to the existence of solutions, notice that the coefficients $A$ and $B$ are defined only $\PP$-a.e. on $\Omega$, whereas the theory of Lyapunov exponents requires us to have the solution operator defined on the whole of $\Omega$.  In~particular, we need to have
\begin{align}\label{aL1loc}
\bigl[\, \R\ni t\mapsto a(\theta_t\w)\in \R\,\bigr]&\in L_{1,\text{loc}}(\R)\,,\\
\label{bLqloc}
\bigl[ \, \R\ni t\mapsto b(\theta_t\w)\in \R\,\bigr]& \in L_{q,\text{loc}}(\R)\subset L_{1,\text{loc}}(\R) \,,
\end{align}
for each $\omega \in \Omega$.
Since $a\in L_1\OFP$ and the measure $\PP$ is invariant, for any $t\in\R$
\begin{equation*}
\int_\Omega a(\theta_t{\omega'})\,d\PP(\omega') = \int_\Omega a(\omega')\,d\PP(\omega') \,
\end{equation*}
and an application of Fubini's theorem gives~\eqref{aL1loc} for $\w\in\Omega_1\subset \Omega$,  invariant set of full measure.
Analogously, from the second condition in~\ref{S2}, there is an invariant set $\Omega_2 \subset \Omega$ of full measure such that
$
\bigl[ \R\ni t\mapsto b(\theta_t\w)\in \R\bigr]\in L_{q,\text{loc}}(\R)\subset L_{1,\text{loc}}(\R)\,.
$
Then we can put the value of $A(\w)$  and $B(\w)$ for $\w\in\Omega\setminus (\Omega_1\cap \Omega_2)$ to be equal to zero to obtain~\eqref{aL1loc} and~\eqref{bLqloc} for all $\w\in\Omega$, as needed.
\par\smallskip
As a consequence, for a fixed $\w\in\Omega$ we will denote by  $U_{\omega}^{0}(\cdot)$  the fundamental matrix solution of the system of Carath\'{e}odory linear ordinary  differential equations $z' = A(\theta_{t}\omega)\,z$ and define
\begin{equation}\label{deficd}
c(\omega) := \sup\limits_{0 \le t_1 < t_2 \le 1} \norm{U^{0}_{\theta_{t_1}\omega}(t_2 - t_1)}\,,\quad d(\w):=\biggl(\int_{-1}^0 \!b^q(\theta_{s+1}\w)\,ds\biggr)^{\!\!1/q}\,.
\end{equation}
Notice that $c(\w)\geq 1$. The following two lemmas will be used later.
\begin{lemma}
\label{lm:estimate_U0}
Assume \textup{\ref{S1}} and \textup{\ref{S2}}.  Then for any $\omega \in \Omega$,
\begin{equation*}
c(\omega) \le \exp\biggl( \int_{0}^{1} a(\theta_{\tau}\omega) \, d\tau  \biggr).
\end{equation*}
\end{lemma}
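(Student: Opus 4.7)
The plan is to reduce the bound to a standard Gronwall estimate applied pointwise to solutions of the Carath\'eodory linear ODE $z' = A(\theta_t\omega)\,z$. By the way the fundamental matrix is defined, it suffices to bound $\norm{U^{0}_{\theta_{t_1}\omega}(t_2 - t_1)\,v}$ in terms of $\norm{v}$ uniformly in $v \in \R^N$ with $\norm{v} \le 1$ and in $0 \le t_1 < t_2 \le 1$, and then pass to the supremum defining $c(\omega)$.

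More concretely, I would fix $\omega \in \Omega$, $t_1 \in [0,1)$, and $v \in \R^N$, and consider the absolutely continuous map $y(t) := U^{0}_{\theta_{t_1}\omega}(t - t_1)\,v$ for $t \in [t_1, 1]$. This function satisfies $y'(t) = A(\theta_{t}\omega)\,y(t)$ for a.e.\ $t$ and $y(t_1) = v$, so by integration
\begin{equation*}
\norm{y(t)} \le \norm{v} + \int_{t_1}^{t} \norm{A(\theta_s\omega)}\,\norm{y(s)}\,ds = \norm{v} + \int_{t_1}^{t} a(\theta_s\omega)\,\norm{y(s)}\,ds.
\end{equation*}
Recall that condition~\ref{S2} together with the construction in the paragraph preceding~\eqref{deficd} guarantees that $s \mapsto a(\theta_s\omega)$ belongs to $L_{1,\mathrm{loc}}(\R)$ for every $\omega \in \Omega$, so Gronwall's inequality applies and yields
\begin{equation*}
\norm{y(t)} \le \norm{v}\,\exp\!\biggl(\int_{t_1}^{t} a(\theta_s\omega)\,ds\biggr).
\end{equation*}

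Taking $t = t_2$, using that $a \ge 0$ (it is a norm), and enlarging the integration interval to $[0,1]$, I obtain
\begin{equation*}
\norm{U^{0}_{\theta_{t_1}\omega}(t_2 - t_1)\,v} \le \norm{v}\,\exp\!\biggl(\int_{0}^{1} a(\theta_\tau\omega)\,d\tau\biggr),
\end{equation*}
which after dividing by $\norm{v}$ and taking the supremum over $v$ and over $0 \le t_1 < t_2 \le 1$ gives the claimed estimate on $c(\omega)$. The whole argument is routine; the only mild care needed is to invoke the local $L_1$ integrability of $t \mapsto a(\theta_t\omega)$ on \emph{every} $\omega$, which is exactly what was arranged when $A$ was redefined to be zero off the invariant full-measure set $\Omega_1$, so no measurability or integrability obstruction is expected.
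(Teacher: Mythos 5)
Your proposal is correct and follows essentially the same route as the paper: write the column $U^{0}_{\theta_{t_1}\omega}(\cdot)v$ as a solution of the Carath\'eodory ODE, convert to the integral inequality, apply Gronwall, and then enlarge the domain of integration to $[0,1]$ using $a \ge 0$ before taking the supremum defining $c(\omega)$. The only cosmetic difference is your time reparametrization (working on $[t_1,t_2]$ directly instead of shifting to $[0,t_2-t_1]$), which changes nothing of substance.
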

\begin{proof}
Fix $\omega \in \Omega$, $u_0 \in \R^N$, $t_1\geq 0$ and  denote $z(t) := U^0_{\theta_{t_1}\w}(t)\, u_0$.  Since for $t\geq 0$
\begin{equation*}
\norm{z(t)} \le \norm{u_0} + \int_{0}^{t} \norm{A(\theta_{t_1+\tau}\omega)} \,\norm{z(\tau)} \, d\tau = \norm{u_0} + \int_{0}^{t} a(\theta_{t_1+\tau}\omega) \,\norm{z(\tau)} \, d\tau\,,
\end{equation*}
 Gronwall inequality provides
\begin{equation*}
\norm{z(t)}  \le \norm{u_0} \exp\biggl( \int_{0}^{t} a(\theta_{t_1+\tau}\omega) \, d\tau \biggr)
= \norm{u_0} \exp\biggl( \int_{t_1}^{t+t_1} a(\theta_{\tau}\omega) \, d\tau \biggr)
\end{equation*}
for $t\geq 0$. Hence, for $0\leq t_1<t_2\leq 1$
\begin{equation*}
\norm{U^0_{\theta_{t_1}\w}(t_2-t_1)} \leq \exp\biggl( \int_{t_1}^{t_2} a(\theta_{\tau}\omega) \, d\tau \biggr)\le \exp\biggl( \int_{0}^{1} a(\theta_{\tau}\omega) \, d\tau  \biggr)\,,
\end{equation*}
which finishes the proof.
\end{proof}
\begin{lemma}
\label{lm:estimate_pointwise}
 Under assumptions \textup{\ref{S1}} and \textup{\ref{S2}},  for any $\omega \in \Omega$,  $0\leq t \leq 1$, and  $u=(u_1,u_2)\in L=\R^N\times L_p([-1, 0],\R^N)$ there holds
\begin{equation*}
\bigg\|\, U_{\omega}^{0}(t)\,u_1 + \int_{0}^{t} U_{\theta_{\tau}\omega}^{0}(t - \tau) B(\theta_{\tau}\omega)\,u_2(\tau-1) \, d\tau\,\bigg\|
\le c(\w)\, (1+d(\omega)) \, \normL{u}.
\end{equation*}
\end{lemma}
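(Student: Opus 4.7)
The plan is to split the norm via the triangle inequality into two pieces, handle the first by the definition of $c(\omega)$, and the second by H\"older's inequality together with the invariance of $\PP$ hidden in $d(\omega)$.

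First I would estimate the free term. Since $0 \le t \le 1$, the definition of $c(\omega)$ gives
\begin{equation*}
\norm{U^0_\omega(t)\,u_1} \le c(\omega)\,\norm{u_1}.
\end{equation*}

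Next, for the convolution term I would pull out the norm inside the integral. For $0\le \tau \le t \le 1$, again by the definition of $c(\omega)$, $\norm{U^0_{\theta_\tau\omega}(t-\tau)} \le c(\omega)$, so
\begin{equation*}
\bigg\| \int_{0}^{t}\! U^0_{\theta_\tau\omega}(t-\tau)\,B(\theta_\tau\omega)\,u_2(\tau-1)\,d\tau \bigg\|
\le c(\omega)\int_{0}^{t} b(\theta_\tau\omega)\,\norm{u_2(\tau-1)}\,d\tau.
\end{equation*}
Now apply H\"older's inequality with conjugate exponents $q$ and $p$:
\begin{equation*}
\int_{0}^{t}\! b(\theta_\tau\omega)\,\norm{u_2(\tau-1)}\,d\tau
\le \bigg(\int_{0}^{1}\! b^q(\theta_\tau\omega)\,d\tau\bigg)^{\!1/q}\bigg(\int_{0}^{1}\!\norm{u_2(\tau-1)}^p\,d\tau\bigg)^{\!1/p},
\end{equation*}
where I enlarged the integration interval to $[0,1]$. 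The first factor is recognized as $d(\omega)$ after the change of variable $s=\tau-1$, and the second factor becomes $\norm{u_2}_p$ after the same change of variable. So the convolution term is bounded by $c(\omega)\,d(\omega)\,\norm{u_2}_p$.

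Adding the two pieces and using $c(\omega)\ge 1$, $d(\omega)\ge 0$, we get
\begin{equation*}
c(\omega)\,\norm{u_1} + c(\omega)\,d(\omega)\,\norm{u_2}_p \le c(\omega)\,(1+d(\omega))\,(\norm{u_1}+\norm{u_2}_p) = c(\omega)\,(1+d(\omega))\,\normL{u},
\end{equation*}
which is the desired inequality. There is no real obstacle; the only subtlety worth verifying is that the substitution $s=\tau-1$ in the integral $\int_0^1 b^q(\theta_\tau\omega)\,d\tau$ matches exactly the definition of $d(\omega)^q$ from~\eqref{deficd}, and that the choice of exponents $p,q$ with $1/p+1/q=1$ is precisely what makes the summability hypothesis~\ref{S2} available in the form needed.
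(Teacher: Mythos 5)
Your proof is correct and follows essentially the same route as the paper: triangle inequality, the bound $\norm{U^0_{\theta_\tau\omega}(t-\tau)}\le c(\omega)$ from the definition of $c$, enlargement of the integration interval, and H\"older's inequality with the change of variable identifying the first factor with $d(\omega)$. The only cosmetic difference is that the paper performs the substitution $s=\tau-1$ before enlarging the interval and applying H\"older, whereas you enlarge first; this changes nothing.
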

\begin{proof} For simplicity, let us denote the left-hand side by $\norm{z(t)}$. Then, from~\eqref{bLqloc}, \eqref{deficd},  $u_2\in L_p([-1,0],\R^N)$ (i.e. $\norm{u_2}\in L_p([-1,0],\R))$ and H\"{o}lder inequality, we deduce that
\begin{align*}
\norm{z(t)} & \le \norm{U_{\omega}^{0}(t)\,u_1} + \bigg\|\, \int_{0}^{t} U_{\theta_{\tau}\omega}^{0}(t - \tau)\, B(\theta_{\tau}\omega) \,u_2(\tau-1) \, d\tau \bigg\|\\
 & \leq c(\w)\left[ \norm{u_1}+ \int_0^1 b(\theta_{\tau}\omega) \,\norm{u_2(\tau-1)}\,d\tau\right]\\
 & = c(\w)\left[ \norm{u_1}+ \int_{-1}^{t-1} b(\theta_{\tau+1}\omega) \,\norm{u_2(\tau)}\,d\tau\right]\\
 & \leq c(\w)\left[ \norm{u_1}+ \int_{-1}^{0} b(\theta_{\tau+1}\omega) \,\norm{u_2(\tau)}\,d\tau\right]\leq c(\w)\left[\, \norm{u_1}+ d(\w)\,\norm{u_2}_p\,\right]\,,
\end{align*}
which together with~\eqref{normL} finishes the proof.
\end{proof}
\subsubsection{Semiflows on $C([-1,0],\R^N)$}
\label{subsubsect:semiflow-C}
We start with the problem of the existence of solution to the initial value problem
\begin{equation}
\label{eq:IVP-C}
\begin{cases}
z'(t) = A(\theta_{t}\omega) \,z(t) + B(\theta_{t}\omega) \,z(t-1), & t \in [0, \infty)
\\
z(t) = u(t), & t \in [-1, 0],
\end{cases}
\end{equation}
where the initial datum $u$ is assumed to belong to $C=C([-1,0],\R^N)$ and assumptions~\ref{S1} and~\ref{S2} hold.
\par
To emphasize the dependence of the equation (resp.\ the initial value problem) on $\omega \in \Omega$ we will write \eqref{main-delay-eq}$_{\omega}$ (resp.\ \eqref{eq:IVP-C}$_{\omega}$).
\par\smallskip
From~\eqref{aL1loc} and~\eqref{bLqloc}, for a fixed $\w\in\Omega$ and $0\leq t\leq 1$, as shown by Coddington and Levinson~\cite[Theorem~2.1]{book:CL}, the system~\eqref{eq:IVP-C}$_{\omega}$ of Carath\'{e}odory type has a unique solution, which can be written as
\begin{equation}\label{0<t<1}
z(t,\w,u) = U_{\omega}^{0}(t)\, u(0) + \int_0^t U_{\theta_{\tau}\omega}^{0}(t - \tau)\, B(\theta_{\tau}\omega)\, u(\tau-1) \, d\tau\,,
\end{equation}
and, for $1\leq t\leq 2$ as
\begin{equation}\label{1<t<2}
z(t,\w,u)=U_{\omega}^{0}(t-1)\, z(1,\w,u) + \int_1^t U_{\theta_{\tau}\omega}^{0}(t - \tau)\, B(\theta_{\tau}\omega)\, z(\tau-1,\w,u) \, d\tau\,,
\end{equation}
where, as before,  $U_{\omega}^{0}(\cdot)$  denotes the fundamental matrix solution of $z' = A(\theta_{t}\omega)\,z$.
In a recursive way we obtain the formula for $z(t,\w,u)$ for any $t\in[-1,\infty)$.
Moreover, it can be checked that for each $t$ and $r\geq 0$
\begin{equation}\label{cocycle}
z(t+r,\w,u)=z(t,\theta_r\w,z_r(\w,u))\,,
\end{equation}
where $z_r(\w,u)\colon [-1,0]\to \R$, $s\mapsto z(r+s,\w,u)$, and $z_t(\w,u)\in C$ for each $t\geq 0$, $\w\in\Omega$ and $u\in C$.
Therefore, we can define the linear~operator
\begin{equation}\label{defiskpd}
\begin{array}{lccc}
 U^{(C)}_\w(t)\colon & C &\longrightarrow & C \\[.1cm]
                        & u & \mapsto & z_t(\w,u)\,.
\end{array}
\end{equation}
\begin{proposition}\label{Uwbounded} Under assumptions \textup{\ref{S1}} and \textup{\ref{S2}}, $U^{(C)}_\w(t)$ satisfies~\eqref{eq-identity},~\eqref{eq-cocycle} and $U^{(C)}_\w(t)\in \mathcal{L}(C) $ for each $t\geq 0 $ and $\w\in\Omega$.
\end{proposition}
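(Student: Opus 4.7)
The plan is to verify the three required properties (identity, cocycle, boundedness/linearity) in that order, exploiting formulas \eqref{0<t<1}--\eqref{cocycle} together with Lemmas \ref{lm:estimate_U0}--\ref{lm:estimate_pointwise}.

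\textbf{Identity.} By construction $z(s,\w,u)=u(s)$ for $s\in[-1,0]$, so $U^{(C)}_\w(0)u=z_0(\w,u)=u$, giving \eqref{eq-identity}.

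\textbf{Linearity.} Fix $\w\in\Omega$ and $t\geq 0$. For $t\in[0,1]$ the formula \eqref{0<t<1} is visibly linear in $u\mapsto(u(0),u(\cdot-1))$, and for $t\in[-1,0]$ we have $z_t(\w,u)(s)=u(t+s)$ which is linear as well; for $t>1$, iterate formula \eqref{1<t<2}. Alternatively, linearity is a direct consequence of the uniqueness statement used to obtain \eqref{0<t<1}, since the equation \eqref{main-delay-eq} is linear in $z$.

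\textbf{Boundedness on $[0,1]$.} I would first establish that $U^{(C)}_\w(t)\in\mathcal{L}(C)$ for $t\in[0,1]$ by showing
\[
\normC{U^{(C)}_\w(t)u}\leq c(\w)\,(1+d(\w))\,\normC{u}\,.
\]
For $s\in[-1,0]$ with $t+s\leq 0$ one has $z(t+s,\w,u)=u(t+s)$, hence $\norm{z(t+s,\w,u)}\leq\normC{u}\leq c(\w)(1+d(\w))\normC{u}$ (using $c(\w)\geq 1$). For $s\in[-1,0]$ with $t+s\in[0,1]$, apply \eqref{0<t<1}: since $\norm{u(\tau-1)}\leq\normC{u}$ for all $\tau\in[0,1]$, the same chain of estimates as in the proof of Lemma~\ref{lm:estimate_pointwise} (with $u_1=u(0)$ and $u_2=u$ continuous, so $\norm{u_2}_p\leq\normC{u}$ is replaced by the pointwise bound $\normC{u}$, after using H\"older with $d(\w)$) yields $\norm{z(t+s,\w,u)}\leq c(\w)(1+d(\w))\normC{u}$. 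Taking sup over $s$ gives the claim; note that $c(\w)<\infty$ by Lemma~\ref{lm:estimate_U0} and $d(\w)<\infty$ by \eqref{bLqloc}.

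\textbf{Cocycle and boundedness for general $t$.} The cocycle property \eqref{eq-cocycle} is exactly the content of \eqref{cocycle}: for $s\in[-1,0]$,
\[
\bigl(U^{(C)}_{\theta_r\w}(t)\circ U^{(C)}_\w(r)\bigr)u\,(s)=z\bigl(t+s,\theta_r\w,z_r(\w,u)\bigr)=z(t+s+r,\w,u)=U^{(C)}_\w(t+r)u\,(s)\,.
\]
Using this, boundedness for arbitrary $t\geq 0$ follows by decomposing $t=n+\sigma$ with $n\in\NN\cup\{0\}$ and $\sigma\in[0,1]$:
\[
U^{(C)}_\w(t)=U^{(C)}_{\theta_n\w}(\sigma)\circ U^{(C)}_{\theta_{n-1}\w}(1)\circ\cdots\circ U^{(C)}_\w(1)\,,
\]
each factor being bounded by the $[0,1]$-case; hence the composition lies in $\mathcal{L}(C)$.

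The only genuinely delicate point is the $[0,1]$-estimate, which I anticipate to be the main obstacle only to the extent that one must correctly invoke H\"older's inequality with exponent $q$ so as to convert the integral of $b(\theta_\tau\w)$ into $d(\w)$ exactly as in Lemma~\ref{lm:estimate_pointwise}; everything else is then a clean iteration.
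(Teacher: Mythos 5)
Your proposal is correct and follows essentially the same route as the paper: identity is immediate, the cocycle property is read off from \eqref{cocycle}, boundedness is reduced to $t\in[0,1]$ via the cocycle property, and the estimate on $[0,1]$ is obtained from \eqref{0<t<1} and Lemma~\ref{lm:estimate_pointwise} applied with $u_1=u(0)$, $u_2=u$ (the paper states the cruder constant $3\,c(\w)(1+d(\w))$, while your sharper constant is also valid since $\norm{u(0)}\le\normC{u}$ and $\norm{u}_p\le\normC{u}$). The only differences are cosmetic: you spell out linearity and the iteration $t=n+\sigma$, which the paper leaves implicit.
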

\begin{proof}
Relation~\eqref{eq-identity} is immediate and~\eqref{eq-cocycle} follows from~\eqref{cocycle}. Once that this cocycle property is shown, to prove that $U^{(C)}_\w(t)\in \mathcal{L}(C)$ for $t\geq 0$, it is enough to check that $U^{(C)}_\w(t)$ is a bounded operator for $t\in[0,1]$ and $\w\in\Omega$, which is a consequence of equation~\eqref{0<t<1} and Lemma~\ref{lm:estimate_pointwise} applied to $u_1=u(0)$ and $u_2=u$ because
\begin{align*}
\normC{U^{(C)}_\w(t)\,u}& =\sup_{s\in[-1,0]} \norm{z(t+s,\w,u)}
\leq  \sup_{s\in[-t,0]}\{\normC{u}, \norm{z(t+s,\w,u)}\}\\ &\leq \max\left\{\,\normC{u}, \, c(\w)\,(1+d(\w))\,\left[\, \norm{u(0)}+\,\norm{u}_p\,\right]\,\right\}\\
& \leq 3\,c(\w)\,(1+d(\w))\,\normC{u}\,,
\end{align*}
that is, $\norm{U^{(C)}_\w(t)}\leq 3\,c(\w)\,(1+d(\w))$ for $t\in[0,1]$, which finishes the proof.
\end{proof}
\subsubsection{Semiflows on $L$}
\label{subsubsect:semiflow-L}
Consider the initial value problem
\begin{equation}
\label{eq:IVP}
\begin{cases}
z'(t) = A(\theta_{t}\omega) \,z(t) + B(\theta_{t}\omega)\, z(t-1), & t \in [0, \infty)
\\
z(t) = u_2(t), & t \in [-1, 0),
\\
z(0) = u_1,
\end{cases}
\end{equation}
with initial datum $u=(u_1,u_2)$ belonging to $L=\R^N\times L_p([-1,0],\R^N)$, and assume that~\ref{S1} and~\ref{S2} hold.
\par\smallskip
As in Subsection~\ref{subsubsect:semiflow-C}, from~\eqref{aL1loc} and~\eqref{bLqloc}, for a fixed $\w\in\Omega$ and $0\leq t\leq 1$, as shown in~\cite[Theorem~2.1]{book:CL}, the system~\eqref{eq:IVP}$_{\omega}$ of Carath\'{e}odory type has a unique solution, which can be written as
\begin{equation}\label{eq:L-0<t<1}
z(t,\w,u) = U_{\omega}^{0}(t)\, u_1 + \int_0^t U_{\theta_{\tau}\omega}^{0}(t - \tau)\, B(\theta_{\tau}\omega)\, u_2(\tau-1) \, d\tau\,,
\end{equation} and, as before, in a recursive way the solution of \eqref{eq:IVP}$_{\omega}$ can be written as
\begin{equation}
\label{eq:delay-integral-equation-L}
z(t,\w,u) = U_{\omega}^{0}(t)\, u_1 + \int_{0}^{t} U_{\theta_{\tau}\omega}^{0}(t - \tau) B(\theta_{\tau}\omega) z(\tau-1,\w,u) \, d\tau, \quad t > 0.
\end{equation}
\begin{nota}\label{zuJu}
For $u\in C$ the solution $z(\cdot, \omega, u)$ of~\eqref{eq:IVP-C}$_\w$ coincides with the solution $z(\cdot, \omega, Ju)$  of~\eqref{eq:IVP}$_\w$ where $J$ is defined in~\eqref{defiJ}.
\end{nota}
Moreover, it can be checked that for each $t$ and $r\geq 0$
\begin{equation}\label{5.cocycle}
z(t+r,\w,u)=z(t,\theta_r\w,(z(r,\w,u),z_r(\w,u)))\,,
\end{equation}
where, as before,  $z_t(\w,u)\colon [-1,0]\to \R$, $s\mapsto z(t+s,\w,u)$, which together with Lemma~\ref{lm:estimate_pointwise} show that $z_t(\w,u)\in L_p([-1,0],\R)$ for each $t\geq 0$
and we can define the linear~operator
\begin{equation}\label{5.defiskpd}
\begin{array}{lccc}
 U^{(L)}_\w(t)\colon & L &\longrightarrow & L \\[.1cm]
                        & u & \mapsto & (z(t,\w,u),z_t(\w,u))\,.
\end{array}
\end{equation}
\begin{proposition}\label{5.Uwbounded} Under assumptions~\textup{\ref{S1}} and \textup{\ref{S2}}, $U^{(L)}_\w(t)$ satisfies~\eqref{eq-identity},~\eqref{eq-cocycle} and $U^{(L)}_\w(t)\in \mathcal{L}(L) $ for each $t\geq 0 $ and $\w\in\Omega$.
\end{proposition}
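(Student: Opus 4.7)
The plan is to mirror the proof of Proposition~\ref{Uwbounded}. The identity property $U^{(L)}_\w(0) = \mathrm{Id}_L$ is immediate from the definition~\eqref{5.defiskpd}, since $z(0, \omega, u) = u_1$ and $z_0(\omega, u) = u_2$ by the initial conditions in~\eqref{eq:IVP}$_\omega$. For the cocycle property~\eqref{eq-cocycle}, I would verify each component of $U^{(L)}_{\theta_r \omega}(t) \circ U^{(L)}_\omega(r)\, u = U^{(L)}_\omega(t+r)\, u$ separately. Writing $\tilde u := (z(r, \omega, u), z_r(\omega, u))$, the first-component equality $z(t+r, \omega, u) = z(t, \theta_r \omega, \tilde u)$ is exactly~\eqref{5.cocycle}. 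For the second component at $s \in [-1, 0]$ one needs $z(t+r+s, \omega, u) = z(t+s, \theta_r \omega, \tilde u)$: when $t+s \ge 0$ this is~\eqref{5.cocycle} applied with $t+s$ in place of $t$, while when $s < -t$ both sides collapse to initial data and the equality reduces to $\tilde u_2(t+s) = z_r(\omega, u)(t+s) = z(r+t+s, \omega, u)$.

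The nontrivial point is to prove $U^{(L)}_\w(t) \in \mathcal{L}(L)$, and by the cocycle property it suffices to establish the bound for $t \in [0, 1]$. Fix $\w \in \Omega$ and $t \in [0,1]$; I would estimate the two components of $U^{(L)}_\w(t)\, u = (z(t,\w,u), z_t(\w, u))$ separately. For the first component, formula~\eqref{eq:L-0<t<1} is precisely the expression bounded in Lemma~\ref{lm:estimate_pointwise}, giving
\[
\norm{z(t,\w,u)} \le c(\w)\,(1+d(\w))\, \normL{u}.
\]
For the $L_p$-component, I would split the integral defining $\norm{z_t(\w,u)}_p^p = \int_{-1}^{0} \norm{z(t+s,\w,u)}^p\, ds$ at $s = -t$. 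On $[-1,-t)$ the initial condition gives $z(t+s,\w,u) = u_2(t+s)$, so after a change of variables this piece is bounded by $\norm{u_2}_p^p$. On $[-t, 0]$ we have $t+s \in [0,1]$, so Lemma~\ref{lm:estimate_pointwise} applied at each such time yields $\norm{z(t+s,\w,u)} \le c(\w)(1+d(\w))\normL{u}$, and the integral over an interval of length $\le 1$ is bounded by $(c(\w)(1+d(\w)))^p \normL{u}^p$.

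Combining these estimates with the subadditivity $(a+b)^{1/p} \le a^{1/p} + b^{1/p}$ valid for $p>1$, one obtains
\[
\normL{U^{(L)}_\w(t)\, u} \le c(\w)(1+d(\w))\,\normL{u} + \norm{u_2}_p + c(\w)(1+d(\w))\,\normL{u} \le 3\,c(\w)\,(1+d(\w))\,\normL{u},
\]
using that $c(\w) \ge 1$. Both $c(\w)$ and $d(\w)$ are finite under \ref{S1}--\ref{S2} (the former by Lemma~\ref{lm:estimate_U0}, the latter by~\eqref{bLqloc}), so this yields $U^{(L)}_\w(t) \in \mathcal{L}(L)$.

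I expect no serious obstacle; the only mildly delicate point is the splitting argument for $\norm{z_t(\w,u)}_p$, where Lemma~\ref{lm:estimate_pointwise} must be invoked pointwise in $t+s$ rather than at a single time. Once that is handled, the proof is a direct parallel to that of Proposition~\ref{Uwbounded}.
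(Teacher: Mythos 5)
Your proposal is correct and follows essentially the same route as the paper: identity and cocycle from~\eqref{5.cocycle}, reduction to $t\in[0,1]$ via the cocycle property, the bound $\norm{z(t,\w,u)}\le c(\w)(1+d(\w))\normL{u}$ from Lemma~\ref{lm:estimate_pointwise}, and the splitting of $\norm{z_t(\w,u)}_p$ at $s=-t$ into the $\norm{u_2}_p$ piece and the pointwise-estimated piece, yielding the same constant $3\,c(\w)(1+d(\w))$. The only difference is that you spell out the component-wise verification of the cocycle identity and the finiteness of $c(\w)$, $d(\w)$, which the paper leaves implicit.
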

\begin{proof} Relation~\eqref{eq-identity} is immediate and~\eqref{eq-cocycle} follows from~\eqref{5.cocycle}. Again, once that this cocycle property is shown, to prove that $U^{(L)}_\w(t)\in \mathcal{L}(L)$ for $t\geq 0$, it is enough to check that $U^{(L)}_\w(t)$ is a bounded operator for $t\in[0,1]$ and $\w\in\Omega$, which is a consequence of~equation~\eqref{eq:L-0<t<1} and Lemma~\ref{lm:estimate_pointwise} because
\begin{align*}
\normL{U^{(L)}_\w(t)\,u}&=\norm{z(t,\w,u)}+\biggl(\int_{-1}^0 \norm{z(t+s,\w,u)}^p\,ds\biggr)^{\!\!1/p}\leq c(\w)\,(1+d(\w))\,\normL{u}\\
&\quad +\biggl(\int_{-1}^{-t} \norm{u_2(t+s)}^p\,ds\biggr)^{\!\!1/p}+\biggl(\int_{-t}^0 \norm{z(t+s,\w,u)}^p\,ds\biggr)^{\!\!1/p}\\
&\leq 3\,c(\w)\,(1+d(\w))\,\normL{u}\,,
\end{align*}
that is, $\norm{U^{(L)}_\w(t)}\leq 3\,c(\w)\,(1+d(\w))$ for $t\in[0,1]$, which finishes the proof.
\end{proof}
\subsubsection{Connections between semidynamical systems on $C$ and on $L$}
First of all, observe that for any $t \ge 0$ and any $\omega \in \Omega$ one has
\begin{equation}
\label{eq:L-C-1}
U^{(L)}_{\omega}(t) \circ J = J \circ U^{(C)}_{\omega}(t)\,,
\end{equation}
where $J$ is defined in~\eqref{defiJ}.
We can also define for $t\geq 1$ the linear operator
\begin{equation}\label{5.defiskpdLC}
\begin{array}{lccc}
 U^{(L,C)}_\w(t)\colon & L &\longrightarrow & C \\[.1cm]
                        & u & \mapsto & z_t(\w,u)\,.
\end{array}
\end{equation}
\begin{proposition} Under assumptions \textup{\ref{S1}} and \textup{\ref{S2}},  for each $t\geq 1$ and $\w\in\Omega$,  $U^{(L,C)}_\w(t)\in \mathcal{L}(L, C)$ and  is a compact operator.
\end{proposition}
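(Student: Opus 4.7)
The plan is to reduce to the case $t=1$ via the cocycle property and then apply Arzel\`a--Ascoli. From~\eqref{5.cocycle} one reads off the factorization
\[
U^{(L,C)}_\omega(t) \;=\; U^{(L,C)}_{\theta_{t-1}\omega}(1) \,\circ\, U^{(L)}_\omega(t-1),\qquad t\geq 1.
\]
Since $U^{(L)}_\omega(t-1)\in\mathcal{L}(L)$ by Proposition~\ref{5.Uwbounded} and the composition of a bounded operator with a compact one is compact, it suffices to prove that $U^{(L,C)}_\omega(1)\colon L\to C$ is bounded and compact.

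After the change of variable $r=1+s\in[0,1]$, the element $U^{(L,C)}_\omega(1)\,u$ is identified with the continuous function $r\mapsto z(r,\omega,u)$ given by \eqref{eq:L-0<t<1}. Lemma~\ref{lm:estimate_pointwise} applied at every $r\in[0,1]$ yields $\norm{z(r,\omega,u)}\leq c(\omega)(1+d(\omega))\,\normL{u}$, which provides both the boundedness of $U^{(L,C)}_\omega(1)$ and uniform boundedness in $C$ of the image of the closed unit ball of $L$. For equicontinuity I would rewrite \eqref{eq:L-0<t<1} in integrated ODE form: for $0\leq r_1<r_2\leq 1$,
\[
z(r_2,\omega,u)-z(r_1,\omega,u)=\int_{r_1}^{r_2}\!\bigl[A(\theta_\tau\omega)\,z(\tau,\omega,u)+B(\theta_\tau\omega)\,u_2(\tau-1)\bigr]\,d\tau,
\]
and then estimate, using the pointwise bound on $z$ for the ordinary term and H\"older's inequality with conjugate exponents $p,q$ on the delay term,
\[
\norm{z(r_2,\omega,u)-z(r_1,\omega,u)}\leq c(\omega)(1+d(\omega))\normL{u}\!\int_{r_1}^{r_2}\!a(\theta_\tau\omega)\,d\tau + \biggl(\int_{r_1}^{r_2}\!b^q(\theta_\tau\omega)\,d\tau\biggr)^{\!1/q}\!\normL{u},
\]
where I used $\bigl(\int_{r_1}^{r_2}\norm{u_2(\tau-1)}^p\,d\tau\bigr)^{1/p}\leq\norm{u_2}_p\leq\normL{u}$ after the shift $\sigma=\tau-1\in[-1,0]$. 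By \eqref{aL1loc} and \eqref{bLqloc} both $a(\theta_\cdot\omega)$ and $b^q(\theta_\cdot\omega)$ belong to $L_{1,\mathrm{loc}}(\R)$, so absolute continuity of the Lebesgue integral forces the two integrals above to tend to $0$ as $r_2-r_1\to 0$, uniformly in $u$ with $\normL{u}\leq 1$. Arzel\`a--Ascoli then delivers relative compactness of the image of the unit ball.

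The delicate point is precisely the delay integral $\int_{r_1}^{r_2}B(\theta_\tau\omega)\,u_2(\tau-1)\,d\tau$: since $u_2$ is only controlled in $L_p$ and admits no sup\nbd-norm bound, a naive Lipschitz-type estimate against $b(\theta_\tau\omega)$ is unavailable. It is exactly the local $L_q$ summability of $b(\theta_\cdot\omega)$ encoded in~\ref{S2}, with $q$ the H\"older conjugate of the $L_p$ exponent in the phase space, that permits the H\"older estimate to be simultaneously uniform over the unit ball of $L$ and arbitrarily small for small $r_2-r_1$. This is the geometric reason why the second condition in~\ref{S2} was stated in terms of $b^q$ rather than just $b$.
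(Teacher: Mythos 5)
Your proposal is correct and follows essentially the same route as the paper: boundedness of $U^{(L,C)}_{\omega}(1)$ via Lemma~\ref{lm:estimate_pointwise}, equicontinuity of the image of the unit ball via the integrated form of the equation, the pointwise bound on $z$ for the $A$-term and H\"older's inequality against the local $L_q$ summability of $b(\theta_{\cdot}\omega)$ for the delay term, and then Arzel\`a--Ascoli. The only (immaterial) difference is the factorization used to pass from $t=1$ to $t\ge 1$: you write $U^{(L,C)}_{\omega}(t)=U^{(L,C)}_{\theta_{t-1}\omega}(1)\circ U^{(L)}_{\omega}(t-1)$, while the paper uses $U^{(L,C)}_{\omega}(t)=U^{(C)}_{\theta_{1}\omega}(t-1)\circ U^{(L,C)}_{\omega}(1)$; both exhibit $U^{(L,C)}_{\omega}(t)$ as a composition of a bounded operator with a compact one and hence yield compactness.
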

\begin{proof}
 First we check the result for $t=1$ and each $\w\in\Omega$.  Take $u\in L$; from Lemma~\ref{lm:estimate_pointwise} we deduce that
\begin{equation}\label{normU)L,C)(1)}
\normC{U^{(L,C)}_\w(1)\,u}=\sup_{s\in[-1,0]} \norm{z(1+s,\w,u)}\leq c(\w)(1+d(\w))\,\normL{u}\,,
\end{equation}
 which proves that $U^{(L,C)}_\w(1)\in \mathcal{L}(L, C)$, as claimed.   For the compactness, fix $\w\in\Omega$, $0\leq s_1\leq s_2\leq 0$, $u=(u_1,u_2)\in L$ and note that
\begin{equation*}
(U^{(L,C)}_{\w}(1)\,u)(s_2)-(U^{(L,C)}_{\w}(1)\,u)(s_1)=z(1+s_2,\w,u)-z(1+s_1,\w,u)\,.
\end{equation*}
Then denoting $t_1=1+s_1$ and $t_2=1+s_2$, we deduce that
\begin{align*}
\norm{z(t_2,\w,u)-z(t_1,\w,u)}& \leq \int_{t_1}^{t_2} a(\theta_s\omega)\,\norm{z(s,\w,u)}\,ds+\int_{t_1}^{t_2} b(\theta_s\omega)\,\norm{u_2(s-1)}\,ds\,\\
& = \int_{t_1}^{t_2} a(\theta_s\omega)\,\norm{z(s,\w,u)}\,ds +\int_{s1}^{s_2}b(\theta_{s+1}\w)\,\norm{u_2(s)}\,ds\,.
\end{align*}
Moreover, again Lemma~\ref{lm:estimate_pointwise}, \eqref{bLqloc} and H\"{o}lder inequality provide
\begin{align*}
\norm{z(t_2,\w,u)-z(t_1,\w,u)}&\leq c(\w)\,(1+d(\w))\,\norm{u}_L \int_{t_1}^{t_2} a(\theta_s\omega)\,ds\\
&\quad + \norm{u_2}_{p}\,\biggl(\int_{s_1}^{s_2} \!b^q(\theta_{s+1}\w)\,ds\biggr)^{\!\!1/q}\,ds\,,
\end{align*}
which together with \eqref{aL1loc},~\eqref{bLqloc} and $\norm{u_2}_p\leq \normL{u}$ imply the equicontinuity of the set  $\left\{\, U^{(L,C)}_{\omega}(1)\,u \mid  \normL{u} \le k \, \right\}$, and by Ascoli--Arzel\`a theorem, the precompactness, as needed.\par\smallskip
Finally, for $t\geq 1$, from $z_{t}(\w,u)=z_{t-1}(\theta_{1}\w,z_{1}(\w,u))$ we deduce that
\begin{equation}\label{eq:L-C-L}
U^{(L,C)}_\w(t)= U_{\theta_{1}\w}^{(C)}(t-1)\circ U_{\w}^{(L,C)}(1)\,,
\end{equation}
which shows, from Proposition~\ref{Uwbounded}, that for $t\geq 1$, $U^{(L,C)}_\w(t)$ is the composition of a compact operator and a linear operator. Hence, $U^{(L,C)}_\w(t)\in \mathcal{L}(L, C)$ and it is a compact operator for $t\geq 1$ and each $\w\in\Omega$, as stated.
\end{proof}
\begin{corollary}
Under assumptions \textup{\ref{S1}} and \textup{\ref{S2}},  for any $\omega \in \Omega$ and $t \ge 1$, $U^{(L)}_{\omega}(t)$ and $U^{(C)}_\omega(t)$ are compact operators.
\end{corollary}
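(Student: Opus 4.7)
The plan is to derive compactness of both $U^{(C)}_\omega(t)$ and $U^{(L)}_\omega(t)$ for $t \ge 1$ by factoring each of them through the compact operator $U^{(L,C)}_\omega(t) \in \mathcal{L}(L,C)$ supplied by the preceding proposition. The bounded embedding $J \in \mathcal{L}(C,L)$ from~\eqref{defiJ} serves as the bridge in both directions, so the work reduces to verifying two one-line identities.

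First I would handle $U^{(C)}_\omega(t)$. Fix $\omega \in \Omega$ and $u \in C$; by Remark~\ref{zuJu} the solution of~\eqref{eq:IVP-C}$_\omega$ with initial datum $u$ coincides with the solution of~\eqref{eq:IVP}$_\omega$ with initial datum $Ju \in L$, so their time-$t$ segments agree for every $t \ge 0$ and
\[
U^{(C)}_\omega(t)\,u \;=\; z_t(\omega, u) \;=\; z_t(\omega, Ju) \;=\; U^{(L,C)}_\omega(t)(Ju).
\]
Thus $U^{(C)}_\omega(t) = U^{(L,C)}_\omega(t) \circ J$. For $t \ge 1$ the first factor is compact and $J$ is bounded, so the composition is compact on $C$.

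Next, for $U^{(L)}_\omega(t)$, I would unpack~\eqref{5.defiskpd} and use the fact that, for $t \ge 1$, the segment $z_t(\omega,u)$ lies in $C([-1,0],\R^N)$ (this is precisely $U^{(L,C)}_\omega(t)\,u$), so the pointwise evaluation $z(t,\omega,u) = z_t(\omega,u)(0)$ makes sense and yields
\[
U^{(L)}_\omega(t)\,u \;=\; \bigl(z_t(\omega,u)(0),\, z_t(\omega,u)\bigr) \;=\; J\bigl(U^{(L,C)}_\omega(t)\,u\bigr),
\]
i.e.\ $U^{(L)}_\omega(t) = J \circ U^{(L,C)}_\omega(t)$. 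This is the composition of a compact operator $L \to C$ with a bounded operator $C \to L$, hence compact in $\mathcal{L}(L)$.

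No step presents a genuine obstacle: the analytic content (regularity after one time unit, the Ascoli--Arzel\`a argument) was already absorbed into the compactness of $U^{(L,C)}_\omega(t)$, and the corollary is just a pair of factorizations. The only mild point to check is the identification $z(t,\omega,u) = z_t(\omega,u)(0)$ in the $L$-framework, but this is automatic once $t \ge 1$ makes $z_t(\omega,u)$ a genuinely continuous representative evaluable at $s = 0$.
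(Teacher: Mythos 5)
Your proof is correct and follows the paper's argument exactly: the paper likewise invokes the factorizations $U^{(C)}_{\omega}(t) = U^{(L,C)}_{\omega}(t) \circ J$ and $U^{(L)}_{\omega}(t) = J \circ U^{(L,C)}_{\omega}(t)$ (its equation~\eqref{eq:L-C-2}) together with the boundedness of $J$ and the compactness of $U^{(L,C)}_{\omega}(t)$. Your extra remarks justifying the two identities are sound and merely make explicit what the paper leaves implicit.
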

\begin{proof}
Since $J$ is a bounded operator from $C$ to $L$, the result is a consequence of the previous proposition and the relations
\begin{equation}
\label{eq:L-C-2}
U^{(L)}_{\omega}(t) = J \circ U^{(L, C)}_{\omega}(t), \quad U^{(C)}_{\omega}(t) = U^{(L, C)}_{\omega}(t) \circ J
\end{equation}
for any $t \ge 1$ and any $\omega \in \Omega$.
\end{proof}
\begin{lemma}
\label{lm:estimate_L-to-C}
Assume \textup{\ref{S1}} and \textup{\ref{S2}}.  Then
\begin{enumerate}
\item[\textup{(i)}]
for any $\omega \in \Omega$, any $u \in L$  and any $t \ge 0$ there holds
\begin{equation*}
\normC{U_{\omega}^{(L, C)}(t + 1)\, u} \le c(\theta_t\w)\,(1+d(\theta_t\w))\, \normL{U_{\omega}^{(L)}(t) \,u}\,,
\end{equation*}
\item[\textup{(ii)}]
for any $\omega \in \Omega$, any $u\in L$  and any $t \ge 0$ there holds
\begin{equation*}
\normL{U_{\omega}^{(L)}(t + 1)\, u} \le 2\, c(\theta_t\w)\,(1+d(\theta_t\w))\, \normL{U_{\omega}^{(L)}(t) \,u}\,.
\end{equation*}
\end{enumerate}
\end{lemma}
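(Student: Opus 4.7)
The plan is to reduce both inequalities to the time $t=1$ case over the shifted base point $\theta_t\omega$ via the cocycle property, and then invoke the estimate already established in the previous proposition.

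First, I would unwind the notation. For a fixed $u\in L$ and $t\ge 0$, set $v := U_{\omega}^{(L)}(t)\,u = (z(t,\omega,u),z_t(\omega,u))\in L$. By the semiflow identity \eqref{5.cocycle}, for every $s\in[-1,0]$
\begin{equation*}
z(t+1+s,\omega,u) = z(1+s,\theta_t\omega,v),
\end{equation*}
so $z_{t+1}(\omega,u) = z_1(\theta_t\omega,v)$. This rephrases $U_{\omega}^{(L,C)}(t+1)u$ as $U_{\theta_t\omega}^{(L,C)}(1)v$ and, likewise, $U_{\omega}^{(L)}(t+1)u$ as $U_{\theta_t\omega}^{(L)}(1)v$.

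For part (i), I would then apply the bound \eqref{normU)L,C)(1)} proved in the preceding proposition at the base point $\theta_t\omega$ to the vector $v$:
\begin{equation*}
\normC{U_{\theta_t\omega}^{(L,C)}(1)\,v} \le c(\theta_t\omega)\,(1+d(\theta_t\omega))\,\normL{v},
\end{equation*}
which is exactly (i) after substituting back $\normL{v}=\normL{U_\omega^{(L)}(t)u}$. Nothing more is needed here; the only ingredient beyond the cocycle rewrite is Lemma~\ref{lm:estimate_pointwise}, which was precisely the content of the earlier estimate.

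For part (ii), the cleanest route is to observe from \eqref{eq:L-C-2} that $U_{\omega}^{(L)}(t+1) = J\circ U_{\omega}^{(L,C)}(t+1)$ (valid since $t+1\ge 1$), and $\norm{J}=2$ was recorded just after~\eqref{defiJ}. Combining with (i) yields
\begin{equation*}
\normL{U_{\omega}^{(L)}(t+1)\,u} \le 2\,\normC{U_{\omega}^{(L,C)}(t+1)\,u} \le 2\,c(\theta_t\omega)\,(1+d(\theta_t\omega))\,\normL{U_{\omega}^{(L)}(t)\,u},
\end{equation*}
which is (ii). Alternatively, one can prove (ii) directly by splitting $\normL{U_{\theta_t\omega}^{(L)}(1)v}$ into the $\R^N$-part and the $L_p$-part, using Lemma~\ref{lm:estimate_pointwise} for the former and using the uniform pointwise bound on $[-1,0]$ coming from (i) to estimate the $L_p$-integral, again yielding the constant $2$.

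I do not anticipate a serious obstacle: the whole argument is bookkeeping via the cocycle property plus one invocation of Lemma~\ref{lm:estimate_pointwise}. The only subtlety worth watching is that the constants $c(\cdot)$ and $d(\cdot)$ must be evaluated at the correct shifted point $\theta_t\omega$, which is why the cocycle rewrite has to be done carefully before applying the time-$1$ bound; doing this right is what keeps the statement sharp and consistent with the temperedness estimates needed later in Section~\ref{sect:Lyapunov}.
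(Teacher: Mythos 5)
Your proposal is correct and follows exactly the paper's argument: part (i) via the identity $U_{\omega}^{(L,C)}(t+1)=U_{\theta_t\omega}^{(L,C)}(1)\circ U_{\omega}^{(L)}(t)$ from~\eqref{5.cocycle} combined with the time-one bound~\eqref{normU)L,C)(1)}, and part (ii) from (i), the relation~\eqref{eq:L-C-2} and $\norm{J}=2$. No gaps.
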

\begin{proof}
First, from~\eqref{5.cocycle} we deduce that  $U_\w^{(L,C)}(t+1)=U_{\theta_t\w}^{(L,C)}(1)\circ U_\w^{(L)}(t)$ for each $t\geq 0$,  which together with~\eqref{normU)L,C)(1)} proves (i). (ii) is a consequence of (i),~\eqref{eq:L-C-2} and $\norm {J}=2$.
\end{proof}
\subsubsection{Measurability} In order to show that
$\Phi = \allowbreak \big((U_\w^{(L)}(t))_{\w \in \Omega, t \in \R^{+}}, \allowbreak (\theta_t)_{t\in\R}\big)$ and $\Phi = \allowbreak \big((U_\w^{(C)}(t))_{\w \in \Omega, t \in \R^{+}}, \allowbreak (\theta_t)_{t\in\R}\big)$   are  \mlspss\ we start with the following auxiliary lemma.
\begin{lemma}\label{lm:auxiliary-measurable}
 Under \textup{\ref{S1}} and \textup{\ref{S2}},  for $u=(u_1,u_2) \in L=\R^N\times L_p([-1, 0],\R^N)$ and $t \in (0, 1]$ the mapping
\begin{equation*}
\big[\,\Omega \ni \omega \mapsto z(t,\w,u)\in \R^N\,\big] \text{ is }(\mathfrak{F}, \mathfrak{B}(\R^N))\text{-measurable}\,,
\end{equation*}
where $z(t,\w,u)$ denotes the solution of~\eqref{eq:IVP}$_\w$, given by the formula~\eqref{eq:L-0<t<1}.
\end{lemma}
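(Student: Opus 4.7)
The plan is to prove joint $(\mathfrak{B}([0,1])\otimes\mathfrak{F})$-measurability of $(s,\omega)\mapsto z(s,\omega,u)$ by Picard iteration on the integral form of~\eqref{eq:IVP}$_\omega$ restricted to $[0,1]$, namely
\[
z(s) = u_1 + \int_0^s A(\theta_r\omega)\,z(r)\,dr + \int_0^s B(\theta_r\omega)\,u_2(r-1)\,dr, \qquad s \in [0,1],
\]
and then specializing at $s=t$.

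First I would record that the maps $(s,\omega)\mapsto A(\theta_s\omega)$ and $(s,\omega)\mapsto B(\theta_s\omega)$ are jointly $(\mathfrak{B}([0,1])\otimes\mathfrak{F})$-measurable: the map $(s,\omega)\mapsto\theta_s\omega$ is measurable by the very definition of a measurable dynamical system, so composing with~\ref{S1} yields the claim. Taking a Borel representative of the $L_p$-class of $u_2$, the same reasoning gives joint measurability of the forcing term $(s,\omega)\mapsto B(\theta_s\omega)\,u_2(s-1)$ on $[0,1]\times\Omega$.

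Next I would define $z^0(s,\omega):=u_1$ and, inductively,
\[
z^{n+1}(s,\omega) := u_1 + \int_0^s A(\theta_r\omega)\,z^n(r,\omega)\,dr + \int_0^s B(\theta_r\omega)\,u_2(r-1)\,dr,
\]
and prove by induction that each $z^n$ is jointly measurable on $[0,1]\times\Omega$. The inductive step hinges on Fubini's theorem applied to the jointly measurable integrand, and requires pointwise-in-$\omega$ integrability in $r\in[0,1]$: the $A$-term uses~\eqref{aL1loc} together with the uniform-in-$s$ boundedness of $z^n(\cdot,\omega)$, and the $B$-term uses H\"older's inequality combined with~\eqref{bLqloc} and $u_2\in L_p([-1,0],\R^N)$. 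Standard Carath\'eodory--Picard theory (cf.~\cite[Theorem~2.1]{book:CL}) then yields the estimate
\[
\sup_{s\in[0,1]}\norm{z^{n+1}(s,\omega)-z^n(s,\omega)} \,\le\, \frac{C(\omega)}{n!}\biggl(\int_0^1 a(\theta_r\omega)\,dr\biggr)^{\!n},
\]
so the iterates converge uniformly in $s\in[0,1]$ for each fixed $\omega$ to the unique solution $z(\cdot,\omega,u)$.

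Since the pointwise limit of jointly measurable functions is jointly measurable, $(s,\omega)\mapsto z(s,\omega,u)$ is $(\mathfrak{B}([0,1])\otimes\mathfrak{F})$-measurable; evaluation at $s=t$ delivers the desired $\mathfrak{F}$-measurability. The main subtlety lies in the Fubini step: one needs the integrand to be not merely jointly measurable but pointwise-in-$\omega$ integrable in $r\in[0,1]$, and this is supplied precisely by the $L_1$-control~\eqref{aL1loc} on $a(\theta_\cdot\omega)$ together with the $L_p$--$L_q$ duality built into~\ref{S2}, which together control the $B\!\cdot\!u_2$ contribution for every single $\omega\in\Omega$ rather than only almost surely.
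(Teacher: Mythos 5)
Your proof is correct, but it takes a genuinely different route from the paper's. The paper works directly with the variation-of-constants representation \eqref{eq:L-0<t<1}: it cites the known joint measurability of $[\,(\omega,\tau)\mapsto U^0_\omega(\tau)\,]$ (Arnold, Example 2.2.8), deduces from this, from \ref{S1} and from a Borel representative of $u_2$ the joint measurability of the integrand $(\omega,\tau)\mapsto U^0_{\theta_\tau\omega}(t-\tau)\,B(\theta_\tau\omega)\,u_2(\tau-1)$, and concludes with a single application of Fubini's theorem. You instead bypass the fundamental matrix entirely and run a Picard iteration on the integral equation, propagating joint measurability through each iterate and passing to the pointwise limit. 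Both arguments rest on the same two pillars, namely the joint measurability of $(s,\omega)\mapsto (A(\theta_s\omega),B(\theta_s\omega))$ and the everywhere-defined (not merely almost-everywhere) integrability \eqref{aL1loc}--\eqref{bLqloc}; yours is self-contained where the paper delegates the measurability of $U^0_\omega(\cdot)$ to a reference, at the cost of essentially reproving a piece of that standard fact. Two small points you should make explicit: the joint measurability of each iterate $z^n$ comes from combining its continuity in $s$ (for fixed $\omega$) with the Fubini-measurability in $\omega$ (for fixed $s$), which is what licenses the next Fubini step; and the uniform limit of the iterates is identified with the function given by \eqref{eq:L-0<t<1} in the statement via uniqueness of the Carath\'eodory solution.
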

\begin{proof}
It is well known (see, e.g., \cite[Example 2.2.8]{Arn}) that the mapping
\begin{equation*}
\bigl[ \Omega \times [0, t] \ni (\omega, \tau) \mapsto U_{\omega}^{0}(\tau) \in \R^{N \times N}\bigr]\;\; \text{is} \;\; (\mathfrak{F} \otimes \mathfrak{B}([0, t]), \mathfrak{B}(\R^{N \times N}))\text{-measurable}\,.
\end{equation*}
As a consequence, for each $t\in(0,1]$  the mapping
\begin{equation}\label{measU0}
\bigl[\,\Omega \ni \omega \mapsto U_{\omega}^{0}(t)\,u_1\in \R^N \, \bigr]\;\; \text{
is}\;\;  (\mathfrak{F}, \mathfrak{B}(\R^N))\text{-measurable}.
\end{equation}
\smallskip
Another consequence is that the map
$\bigl[\, \Omega\times[0,t] \ni (\omega,\tau) \mapsto U_{\theta_{\tau}\omega}^{0}(t - \tau) \in \R^{N \times N} \, \bigr]$
is $(\mathfrak{F} \otimes \mathfrak{B}([0, t]), \mathfrak{B}(\R^{N \times N}))$\nobreakdash-\hspace{0pt}measurable, and hence, from~\ref{S1} and taking a Borel representation of the function $u_2\in L_p([-1,0],\R^N)$, we deduce that
\begin{equation*}
\big[\,\Omega \times [0,t]\ni (\w,\tau)\mapsto U_{\theta_{\tau}\omega}^{0}(t - \tau)\, B(\theta_{\tau}\omega)\, u_2(\tau-1) \in\R^N\,\big]
\end{equation*}
is $(\mathfrak{F} \otimes \mathfrak{B}([0, t]), \mathfrak{B}(\R^{N}))$\nobreakdash-\hspace{0pt}measurable. Therefore, an application of Fubini's theorem show that for each $t\geq 1$
\begin{equation*}
\Big[\,\Omega \ni \w\mapsto \int_0^t U_{\theta_{\tau}\omega}^{0}(t - \tau)\, B(\theta_{\tau}\omega)\, u_2(\tau-1)\,d\tau \in\R^N\,\Big] \; \text{is}\;  (\mathfrak{F}, \mathfrak{B}(\R^N))\text{-measurable}\,,
\end{equation*}
which together with~\eqref{measU0} and formula \eqref{eq:L-0<t<1} finishes the proof.
\end{proof}
\begin{lemma}
\label{lm:measurable-C}
Assume \textup{\ref{S1}} and \textup{\ref{S2}}.  For $u\in C$ and $t > 0$ fixed, the mapping  $\bigl[ \, \Omega \ni \omega \mapsto U^{(C)}_{\omega}(t)\,u \in C \, \bigr]$ is $(\mathfrak{F}, \mathfrak{B}(C))$-measurable.
\end{lemma}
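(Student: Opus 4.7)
The plan is to first establish $(\mathfrak{F},\mathfrak{B}(\R^N))$-measurability of $\omega\mapsto z(r,\omega,u)$ for each $r\ge -1$, and then upgrade to $(\mathfrak{F},\mathfrak{B}(C))$-measurability of $\omega\mapsto U_\omega^{(C)}(t)u$ using continuity of trajectories together with separability of $C$.

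First I would fix $u\in C$ and prove by induction on $n\in\NN$ that the joint map $(\omega,r)\mapsto z(r,\omega,u)$ is $(\mathfrak{F}\otimes\mathfrak{B}([-1,n]),\mathfrak{B}(\R^N))$-measurable. For $r\in[-1,0]$ this is immediate because $z(r,\omega,u)=u(r)$ does not depend on $\omega$, and for $r\in(0,1]$ it follows from Lemma~\ref{lm:auxiliary-measurable} applied to $Ju=(u(0),u)\in L$ (using Remark~\ref{zuJu}), whose proof actually produces joint $(\omega,r)$-measurability via the standard joint measurability of $(\omega,\tau)\mapsto U_{\theta_\tau\omega}^0(r-\tau)$ together with Fubini. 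For the step from $n$ to $n+1$, I would iterate~\eqref{0<t<1}--\eqref{1<t<2} to write, for $r\in(n,n+1]$,
\begin{equation*}
z(r,\omega,u)=V(\omega,r)\,z(n,\omega,u)+\int_n^r U_{\theta_\tau\omega}^0(r-\tau)\,B(\theta_\tau\omega)\,z(\tau-1,\omega,u)\,d\tau,
\end{equation*}
where $(\omega,r)\mapsto V(\omega,r)$ is jointly measurable by the standard measurability of the fundamental matrix. By~\ref{S1} and the inductive hypothesis applied to $z(\tau-1,\omega,u)$ with $\tau-1\in[n-1,n]$, the integrand is jointly measurable in $(\omega,r,\tau)$, and Fubini's theorem yields the desired joint measurability in $(\omega,r)$ on $\Omega\times[n,n+1]$, exactly as in Lemma~\ref{lm:auxiliary-measurable}.

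Next I would fix $t>0$ and upgrade to measurability into $C$. For each $s\in[-1,0]$, slicing the joint measurability gives that $\omega\mapsto z(t+s,\omega,u)$ is $(\mathfrak{F},\mathfrak{B}(\R^N))$-measurable. Since the trajectory $[\,s\mapsto z(t+s,\omega,u)=(U_\omega^{(C)}(t)u)(s)\,]$ is continuous on $[-1,0]$ for every $\omega\in\Omega$, for every $g\in C$ one has
\begin{equation*}
\normC{U_\omega^{(C)}(t)u-g}=\sup_{s\in[-1,0]\cap\QQ}\norm{z(t+s,\omega,u)-g(s)},
\end{equation*}
a countable supremum of measurable functions of $\omega$, hence measurable. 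Since $C$ is separable, the open balls $\{h\in C:\normC{h-g}<r\}$ generate $\mathfrak{B}(C)$, so measurability of $\{\omega:\normC{U_\omega^{(C)}(t)u-g}<r\}$ for every $g\in C$ and every $r>0$ yields the desired $(\mathfrak{F},\mathfrak{B}(C))$-measurability.

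The main technical obstacle is the inductive step: verifying the joint $(\omega,\tau)$-measurability of the integrand and applying Fubini on each interval $[n,n+1]$. This is a direct extension of the argument of Lemma~\ref{lm:auxiliary-measurable}, and once it is in hand, the final passage from pointwise-in-$s$ measurability to $C$-valued measurability is a standard consequence of continuity and separability.
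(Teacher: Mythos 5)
Your proof is correct, and at its core it follows the same route as the paper: reduce $(\mathfrak{F},\mathfrak{B}(C))$-measurability of $\omega\mapsto U^{(C)}_{\omega}(t)\,u$ to $(\mathfrak{F},\mathfrak{B}(\R^N))$-measurability of the point evaluations $\omega\mapsto z(t+\tau,\omega,u)$, which in turn comes from the integral representation, Fubini's theorem and the joint measurability of the fundamental matrix, exactly as in Lemma~\ref{lm:auxiliary-measurable} applied to $Ju$ via Remark~\ref{zuJu}. You diverge in two places, both minor but worth recording. First, the paper disposes of $t>1$ by invoking the cocycle property to reduce to $t\in(0,1]$, whereas you run an explicit induction over the intervals $[n,n+1]$ to obtain joint measurability of $(\omega,r)\mapsto z(r,\omega,u)$; your version is somewhat more self-contained, since the cocycle reduction tacitly uses that $(\omega,v)\mapsto U^{(C)}_{\omega}(1)\,v$ is jointly measurable (a Carath\'eodory-function argument of the kind the paper only deploys later, in Proposition~\ref{prop:delay-semiflow-C}), at the cost of re-verifying integrability of the integrand on each step, which your estimates from Lemma~\ref{lm:estimate_pointwise} do supply. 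Second, for the lift from pointwise to $C$-valued measurability the paper cites a Pettis-type theorem from Diestel--Uhl with the point evaluations as a norming subset of $C^{*}$, while you argue directly that $\omega\mapsto\normC{U^{(C)}_{\omega}(t)\,u-g}$ is a countable supremum over $s\in[-1,0]\cap\QQ$ (using continuity of trajectories) and then use separability of $C$ so that balls generate $\mathfrak{B}(C)$; this proves the same equivalence by elementary means. Both arguments are complete; yours trades a citation for a short self-contained verification.
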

\begin{proof}
Since $U_\w^{(C)}$ satisfies the cocycle property~\eqref{eq-cocycle}, it suffices to prove the result for $t \in (0, 1]$. By a variant of Pettis' Measurability Theorem (see, e.g., \cite[Corollary 4 on pp. 42--43]{DiUhl} with an appropriate norming set for the dual space of $C=C([-1,0],\R^N)$), the mapping
\[\bigl[ \, \Omega \ni \omega \mapsto U^{(C)}_{\omega}(t)\,u \in C \, \bigr] \text{ is } (\mathfrak{F}, \mathfrak{B}(C))\text{-measurable}
\]
 if and only if
 \[\bigl[ \, \Omega \ni \omega \mapsto (U^{(C)}_{\omega}(t)\,u)(\tau) \in \R^N \, \bigr] \text{ is } (\mathfrak{F}, \mathfrak{B}(\R^N))\text{-measurable}
\]
for any $\tau\in[-1,0]$, a consequence of Lemma~\ref{lm:auxiliary-measurable} applied to $Ju=(u(0),u)\in L$ because $(U^{(C)}_{\omega}(t)\,u)(\tau)=z(t+\tau,\w,u)=z(t+\tau,\w, Ju)$ (see Remark~\ref{zuJu}).
\end{proof}
\begin{proposition}\label{prop:delay-semiflow-C}
Under \textup{\ref{S1}} and \textup{\ref{S2}}, $\bigl((U^{(C)}_{\omega}(t))_{\omega \in \Omega, t \in [0,\infty)}, (\theta_{t})_{t \in \R}\bigr)$ is a measurable linear skew\nobreakdash-\hspace{0pt}product semiflow on $C$ covering $\theta$.
\end{proposition}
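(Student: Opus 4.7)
The plan is to combine Proposition~\ref{Uwbounded} with a Carath\'eodory-type measurability argument.  Proposition~\ref{Uwbounded} already supplies the cocycle property~\eqref{eq-cocycle}, the identity~\eqref{eq-identity}, and the assertion $U^{(C)}_\omega(t) \in \mathcal{L}(C)$ for all $\omega \in \Omega$ and $t \ge 0$, so the only remaining task is to prove the joint $(\mathfrak{B}(\R^{+}) \otimes \mathfrak{F} \otimes \mathfrak{B}(C), \mathfrak{B}(C))$-measurability of
\begin{equation*}
\Psi\colon [0,\infty) \times \Omega \times C \to C,\qquad (t,\omega,u) \mapsto U^{(C)}_\omega(t)\,u.
\end{equation*}

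I would split the verification of the joint measurability of $\Psi$ into three separate facts and then glue them using the standard principle that a Carath\'eodory map on the product of a measurable space and a separable metric space is jointly measurable.  First, for each fixed $u \in C$ and each fixed $t \ge 0$, Lemma~\ref{lm:measurable-C} yields that $\omega \mapsto U^{(C)}_\omega(t)\,u$ is $(\mathfrak{F}, \mathfrak{B}(C))$-measurable.  Second, for each fixed $\omega \in \Omega$ and each fixed $u \in C$, the trajectory $t \mapsto U^{(C)}_\omega(t)\,u$ is continuous from $[0,\infty)$ into $C$; this is the key delay-equation input and follows from the representations~\eqref{0<t<1}--\eqref{1<t<2} extended recursively, since $\tau \mapsto z(\tau,\omega,u)$ is continuous on $[-1,0]$ and absolutely continuous on every compact subinterval of $[0,\infty)$ (as a Carath\'eodory solution), hence uniformly continuous on $[-1, T]$ for every $T > 0$.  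Third, for each fixed $(t,\omega)$, continuity of $u \mapsto U^{(C)}_\omega(t)\,u$ in $C$ is immediate from $U^{(C)}_\omega(t) \in \mathcal{L}(C)$.

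The gluing then proceeds in two steps.  Using separability of $[0,\infty)$, the Carath\'eodory principle applied to the first two ingredients gives joint $(\mathfrak{B}(\R^{+}) \otimes \mathfrak{F})$-measurability of $(t,\omega) \mapsto U^{(C)}_\omega(t)\,u$ for every fixed $u \in C$.  Applying the same principle a second time, now using separability of $C$ together with the continuity of $u \mapsto U^{(C)}_\omega(t)\,u$, upgrades this to joint $(\mathfrak{B}(\R^{+}) \otimes \mathfrak{F} \otimes \mathfrak{B}(C))$-measurability of $\Psi$, as required.

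The main obstacle is the continuity in $t$: it is not purely formal, because it has to be extracted from the integral representation of the delay solution and the absolute continuity of $z(\cdot,\omega,u)$ inherited from the Carath\'eodory framework (together with the pointwise bound of Lemma~\ref{lm:estimate_pointwise} for the recursive step on $[n, n+1]$).  Once that continuity is in hand, the two Carath\'eodory gluing steps are routine.
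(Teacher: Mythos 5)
Your proposal is correct and follows essentially the same route as the paper: the paper also reduces the claim to the joint measurability of $(t,\omega,u)\mapsto U^{(C)}_\omega(t)\,u$, invoking Proposition~\ref{Uwbounded} for the algebraic and boundedness properties, Lemma~\ref{lm:measurable-C} for measurability in $\omega$, continuity of the trajectory $t\mapsto U^{(C)}_\omega(t)\,u$, and the Carath\'eodory joint-measurability lemma (\cite[Lemma 4.51]{AliB}) to glue these together. Your two-step gluing and the explicit justification of continuity in $t$ merely spell out what the paper leaves as ``easily seen.''
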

\begin{proof}
Since for $\w\in\Omega$ and $u\in C$ fixed the mapping $\bigl[\,\R^+\ni t\mapsto U_\w^{(C)}(t)\,u\in C\,\bigr]$ is easily seen to be continuous, the fact that the mapping
\[
\bigl[\,\R^+\times \Omega\times L\ni(t,\w,u) \mapsto U_\w(t)\,u\in C\,\bigr]\]
is $(\mathfrak{B}(\R^+)\otimes\mathfrak{F}\otimes\mathfrak{B}(C),
\mathfrak{B}(C))$-measurable
follows from Proposition~\ref{Uwbounded}, Lemma~\ref{lm:measurable-C} and~\cite[Lemma 4.51 on pp.~153]{AliB}. The rest of the properties have been already checked, so that $\Phi$ is a \mlsps, as claimed.
\end{proof}
\begin{lemma}
\label{lm:measurable-L}
Assume \textup{\ref{S1}} and \textup{\ref{S2}}.  For $u\in L$ and $t > 0$ fixed, the mapping  $\bigl[ \, \Omega \ni \omega \mapsto U^{(L)}_{\omega}(t)\,u\in L \, \bigr]$ is $(\mathfrak{F}, \mathfrak{B}(L))$-measurable.
\end{lemma}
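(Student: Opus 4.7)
The plan is to mirror the strategy used for Lemma~\ref{lm:measurable-C}, but adapted to the fact that the second coordinate of $U^{(L)}_\omega(t)\,u = (z(t,\omega,u),\, z_t(\omega,u))$ lives in $L_p([-1,0], \R^N)$ rather than in $C$. The approach is first to reduce to the case $t \in (0, 1]$ via the cocycle~\eqref{eq-cocycle}, and then to treat the two coordinates separately.

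For the reduction, once measurability for $s \in (0, 1]$ is established, boundedness of $U^{(L)}_{\omega'}(s)$ makes it continuous (hence $\mathfrak{B}(L)$-measurable) in the vector argument, while it is measurable in $\omega'$ for fixed vector; so by~\cite[Lemma~4.51]{AliB} the joint map $(\omega', v) \mapsto U^{(L)}_{\omega'}(s)\,v$ is $(\mathfrak{F} \otimes \mathfrak{B}(L), \mathfrak{B}(L))$-measurable. Composing with the measurable map $\omega \mapsto (\theta_n\omega,\, U^{(L)}_\omega(n)u)$ and using the cocycle then pushes the induction from $t = n$ to $t \in (n, n+1]$.

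The core work is thus for $t \in (0, 1]$. The first coordinate $\omega \mapsto z(t, \omega, u) \in \R^N$ is handled directly by Lemma~\ref{lm:auxiliary-measurable}. For the second coordinate $\omega \mapsto z_t(\omega, u) \in L_p([-1, 0], \R^N)$, I plan to invoke Pettis's theorem: since $L_p$ is separable, it suffices to verify weak measurability, i.e.\ that $\omega \mapsto \int_{-1}^{0} \langle z_t(\omega, u)(s),\, \phi(s)\rangle\, ds$ is $(\mathfrak{F}, \mathfrak{B}(\R))$-measurable for every test function $\phi \in L_q([-1, 0], \R^N)$. To arrange this, I upgrade the argument of Lemma~\ref{lm:auxiliary-measurable} from pointwise to joint measurability: the map $(\omega, \sigma) \mapsto U^0_\omega(\sigma)\,u_1$ is $(\mathfrak{F} \otimes \mathfrak{B}([0, 1]), \mathfrak{B}(\R^N))$-measurable, and Fubini applied to the jointly measurable integrand $(\omega, \sigma, r) \mapsto \mathbf{1}_{[0, \sigma]}(r)\, U^0_{\theta_r \omega}(\sigma - r)\, B(\theta_r \omega)\,u_2(r-1)$ yields joint measurability of the integral appearing in~\eqref{eq:L-0<t<1}. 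Thus $(\omega, \sigma) \mapsto z(\sigma, \omega, u)$ is jointly measurable on $\Omega \times [0, 1]$, so $(\omega, s) \mapsto z_t(\omega, u)(s)$ —which equals $u_2(t+s)$ on $[-1, -t)$ and $z(t+s, \omega, u)$ on $[-t, 0]$— is jointly measurable on $\Omega \times [-1, 0]$. Integrability against $\phi$ is secured by the bound of Lemma~\ref{lm:estimate_pointwise} together with H\"older's inequality, and a final Fubini delivers the required weak measurability.

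The main obstacle is precisely this $L_p$-valued measurability step. In the $C$-valued analogue of Lemma~\ref{lm:measurable-C} the evaluation functionals $\{u \mapsto u(\tau)\}_{\tau \in [-1,0] \cap \QQ}$ form a countable norming family, reducing matters to the pointwise-in-$\tau$ measurability supplied by Lemma~\ref{lm:auxiliary-measurable}. An $L_p$-class, by contrast, has no canonical pointwise meaning, so one must first genuinely upgrade the $\omega$-measurability to joint measurability in $(\omega, \sigma)$ before integrating against test functions from $L_q$. Once this technical step is in place, the remainder of the argument is a routine matter of Fubini's theorem and the cocycle reduction described above.
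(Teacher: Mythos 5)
Your proposal is correct and follows essentially the same route as the paper: reduce to $t\in(0,1]$ by the cocycle property, invoke Pettis' theorem on the separable space to reduce strong to weak measurability, and verify the pairing against elements of $L_q$ via Lemma~\ref{lm:auxiliary-measurable} and Fubini. Your explicit upgrade to joint $(\omega,\sigma)$-measurability is exactly the content hidden in the paper's ``similar arguments,'' since those joint-measurability facts are already established inside the proof of Lemma~\ref{lm:auxiliary-measurable}.
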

\begin{proof}
It follows from~\eqref{eq-cocycle} that it suffices to prove the result for $t\in(0,1]$ only.  Since $L$ is separable, from Pettis' Theorem (see~Hille and Phillips~\cite[Theorem 3.5.3 and Corollary 2 on pp. 72--73]{HiPhi}) the weak and strong measurability notions are equivalent and therefore,  it is enough to check that for each $u^*\in L^*$ the mapping
\begin{equation}\label{5.u*measur}
\bigl[\,\Omega\ni\w\mapsto \langle u^*, U_\w^{(L)}(t)\,u\rangle \in \R\,\bigr] \text{ is } (\mathfrak{F},
\mathfrak{B}(\R))\text{-measurable}\,.
\end{equation}
Fixing $u^*=(u_1^*,u_2^*)\in\R^N\times L_q([-1,0],\R^N)$, $u=(u_1,u_2)\in \R^N\times L_p([-1,0],\R^N)$ and $t\in(0,1]$, we have
\begin{align*}
\langle u^*, U_\w^{(L)}(t)\,u\rangle & = (u_1^*)^t z(t,\w,u)+ \int_{-1}^0 (u_2^*(\tau))^t z(t+\tau,\w,u)\,d\tau\,,\\
\int_{-1}^0\!\! (u_2^*(\tau))^t z(t+\tau,\w,u)\,d\tau
 &= \int_{-1}^{-t}\!\!\! (u_2^*(\tau))^t u_2(t+\tau)\,ds
 +\int_{0}^t \!\!(u_2^*(\tau-1))^t z(\tau,\w,u)\,d\tau\,.
\end{align*}
 Hence, Lemma~\ref{lm:auxiliary-measurable} and similar arguments prove that~\eqref{5.u*measur} holds, as stated.
\end{proof}
\begin{proposition}
\label{prop:delay-semiflow-L}
Under \textup{\ref{S1}} and \textup{\ref{S2}}, $\bigl((U^{(L)}_{\omega}(t))_{\omega \in \Omega, t \in [0,\infty)}, (\theta_{t})_{t \in \R}\bigr)$ is a measurable linear skew\nobreakdash-\hspace{0pt}product semiflow on $L$ covering $\theta$.
\end{proposition}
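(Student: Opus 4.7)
The plan is to imitate the proof of Proposition~\ref{prop:delay-semiflow-C} essentially verbatim. The linearity and boundedness $U^{(L)}_\omega(t)\in\mathcal{L}(L)$, together with \eqref{eq-identity} and the cocycle property \eqref{eq-cocycle}, are already supplied by Proposition~\ref{5.Uwbounded}. Therefore the only thing left to verify is the joint measurability
\[
\bigl[\,\R^+\times\Omega\times L\ni(t,\w,u)\mapsto U^{(L)}_\w(t)\,u\in L\,\bigr]\ \text{is}\ (\mathfrak{B}(\R^+)\otimes\mathfrak{F}\otimes\mathfrak{B}(L),\mathfrak{B}(L))\text{-measurable.}
\]

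First I would establish continuity of $[\R^+\ni t\mapsto U^{(L)}_\w(t)\,u\in L]$ for each fixed $\w\in\Omega$ and $u=(u_1,u_2)\in L$. The first coordinate $z(t,\w,u)$ is continuous in $t$ on $[0,\infty)$ by construction as a Carathéodory solution of~\eqref{eq:IVP}$_\w$. For the second coordinate $z_t(\w,u)\in L_p([-1,0],\R^N)$, I would introduce the extension $\tilde z\colon[-1,\infty)\to\R^N$ given by $\tilde z(s):=u_2(s)$ for $s\in[-1,0)$ and $\tilde z(s):=z(s,\w,u)$ for $s\ge 0$. This function lies in $L_{p,\mathrm{loc}}([-1,\infty),\R^N)$, and $z_t(\w,u)$ is nothing but $\tilde z(\,\cdot+t\,)|_{[-1,0]}$; continuity in $t$ in the $L_p$-norm is then the classical continuity of translation in $L_p$.

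Next I would combine this continuity with Lemma~\ref{lm:measurable-L} and apply \cite[Lemma~4.51 on p.~153]{AliB} exactly as in Proposition~\ref{prop:delay-semiflow-C}: for each fixed $u\in L$ the map $(t,\w)\mapsto U^{(L)}_\w(t)\,u$ is Carathéodory (continuous in $t$, measurable in $\w$) into the separable Banach space $L$, hence jointly $(\mathfrak{B}(\R^+)\otimes\mathfrak{F},\mathfrak{B}(L))$-measurable. A second application of the same principle, with $(t,\w)$ as the measurable variable and $u$ as the continuous one (continuity in $u$ following from $U^{(L)}_\w(t)\in\mathcal{L}(L)$ and separability of $L$), upgrades this to measurability in the three variables $(t,\w,u)$.

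The main (minor) obstacle is the second-coordinate continuity: in the $C$-case uniform continuity of the path along $[-1,0]$ is transparent from the sup-norm, whereas in $L$ one has to pass through the extension $\tilde z$ and invoke $L_p$-translation continuity, keeping track of the fact that the initial datum $u_2$ itself is only an equivalence class of $L_p$-functions. Once this step is in place, the remainder of the argument is a carbon copy of the proof of Proposition~\ref{prop:delay-semiflow-C}.
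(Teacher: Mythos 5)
Your proposal is correct and follows essentially the same route as the paper: continuity in $t$ for fixed $(\omega,u)$, measurability in $\omega$ from Lemma~\ref{lm:measurable-L}, boundedness and the cocycle property from Proposition~\ref{5.Uwbounded}, and then \cite[Lemma~4.51]{AliB} to obtain joint measurability, exactly as in Proposition~\ref{prop:delay-semiflow-C}. The only difference is that you spell out the continuity of the second coordinate $t\mapsto z_t(\omega,u)$ via the extension $\tilde z$ and translation continuity in $L_p$, a detail the paper leaves implicit.
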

\begin{proof}
Since for $\omega \in \Omega$ and $u\in L$ the mapping $\bigl[\, \R^+\ni t \mapsto U^{(L)}_{\omega}(t)\,u \in L \, \bigr]$ is continuous, as in Proposition~\ref{prop:delay-semiflow-C}  the result follows from  Proposition~\ref{5.Uwbounded}, Lemma~\ref{lm:measurable-L} and~\cite[Lemma 4.51 on pp.~153]{AliB}.
\end{proof}
\begin{nota}
 It can also be proved that for $u \in L$ and $t \ge 1$ fixed, the mapping  $\bigl[ \, \Omega \ni \omega \mapsto U^{(L,C)}_{\omega}(t)\, u \in C \, \bigr]$ is $(\mathfrak{F}, \mathfrak{B}(C))$-measurable.
\end{nota}
\section{Lyapunov exponents}\label{sect:Lyapunov}
In this section it is shown that the Lyapunov exponents for the two cases considered in the previous section are the same, and that the Oseledets decomposition are related by natural embeddings.
\par\smallskip
Throughout the whole section we assume that~\ref{S1} and~\ref{S2} holds.
\begin{lemma}\label{lm:zero}
Consider the functions $c$ and $d$ defined on~\eqref{deficd}. There exists an invariant subset $\Omega_1 \subset \Omega$ with $\PP(\Omega_1) = 1$ such that for any $\omega \in \Omega_1$
\begin{align*}
\limsup\limits_{t \to \infty} \frac{\ln c(\theta_{t}\omega)}{t} \le 0\,, &\quad \limsup\limits_{t \to \infty} \frac{\ln d(\theta_{t}\omega)}{t} \le 0,\\ \liminf_{t\to -\infty}\;\frac{\ln c(\theta_{t}\omega)}{t}\geq 0\,, & \quad \liminf_{t\to -\infty}\;\frac{\ln d(\theta_{t}\omega)}{t}\geq 0\,.
\end{align*}
\end{lemma}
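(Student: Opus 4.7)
The plan is to reduce the four statements to the standard ``tempering'' property for $\PP$-integrable nonnegative functions on $\Omega$. First I would show that $\ln c$ and $\lnplus{d}$ are dominated by $\PP$-integrable functions; then invoke a Borel--Cantelli argument along integer times; and finally interpolate to continuous time via a simple two-interval covering.

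For the domination step, Lemma~\ref{lm:estimate_U0} gives the pointwise bound $\ln c(\omega) \le g(\omega) := \int_{0}^{1} a(\theta_\tau \omega)\,d\tau$; since $c \ge 1$, this coincides with $\lnplus{c(\omega)}$. Fubini's theorem combined with the $\PP$-invariance of $(\theta_t)$ and $a \in L_1\OFP$ from~\textup{\ref{S2}} yields $g \in L_1\OFP$. Similarly, from the definition of $d$ in~\eqref{deficd} and the elementary identity $\lnplus{y^{1/q}} = q^{-1}\lnplus{y}$ valid for $y \ge 0$,
\[
\lnplus{d(\omega)} = q^{-1}\, h(\omega), \qquad h(\omega) := \lnplus{\int_{0}^{1} b^{q}(\theta_{r}\omega)\,dr},
\]
and $h \in L_1\OFP$ is exactly the second condition in~\textup{\ref{S2}}.

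For the tempering step, I would apply the classical lemma to $g$ and $h$ separately. For any nonnegative $f \in L_1\OFP$ and $\epsilon > 0$, the $\PP$-invariance of $\theta_n$ gives $\PP\{f \circ \theta_{n} > \epsilon |n|\} = \PP\{f > \epsilon |n|\}$, so
\[
\sum_{n \ge 1} \PP\{f > \epsilon n\} \;\le\; \epsilon^{-1} \int_{\Omega} f\,d\PP \;<\;\infty;
\]
Borel--Cantelli then supplies an invariant full-measure set outside which $f(\theta_{n}\omega)/|n| \to 0$ as $|n| \to \infty$ through integers, with the cases $n \to +\infty$ and $n \to -\infty$ treated identically, since both $\theta_{n}$ and $\theta_{-n}$ are $\PP$-preserving.

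To pass to continuous time I would observe that for $t \in [n, n+1)$ with $n \in \ZZ$ the interval $[t, t+1]$ is contained in $[n, n+2]$, so $g(\theta_t \omega) \le g(\theta_n \omega) + g(\theta_{n+1}\omega)$, and likewise for $h$. Taking $\Omega_{1}$ as the intersection of the two invariant sets produced above, for every $\omega \in \Omega_{1}$ one obtains $\lnplus{c(\theta_t\omega)}/|t| \to 0$ and $\lnplus{d(\theta_t\omega)}/|t| \to 0$ as $|t| \to \infty$. The limsup inequalities for $t \to +\infty$ follow at once from $\ln c = \lnplus{c}$ and $\ln d \le \lnplus{d}$; for $t \to -\infty$ the same upper bounds on $\lnplus{}$ together with $t < 0$ yield the required liminf inequalities. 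The only mildly delicate point in the whole argument is this passage from integer to real time, and it is handled by the two-interval covering just described; I do not foresee any genuine obstacle.
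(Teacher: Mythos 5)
Your argument is correct and follows essentially the same route as the paper's: integrability of $\lnplus{c}$ and $\lnplus{d}$ via Lemma~\ref{lm:estimate_U0} and \ref{S2}, sublinearity along integer times, and interpolation to real $t$ by comparison with the two neighbouring integers (the paper uses the submultiplicativity $c(\theta_{t}\omega)\le c(\theta_{\lfloor t\rfloor}\omega)\,c(\theta_{\lfloor t\rfloor+1}\omega)$ and the subadditivity of $d$ where you use the dominating integrals $g,h$, and it treats $t\to-\infty$ by passing to the reversed flow rather than running Borel--Cantelli in both directions, but these are cosmetic differences). The only microscopic slips are that $\lnplus$ is subadditive only up to the additive constant $\ln 2$, and that invariance of the full-measure set should be asserted for the continuous-time limit set rather than at the integer-time stage; neither affects the conclusion.
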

\begin{proof}
From~\ref{S2} and Lemma~\ref{lm:estimate_U0}, $\lnplus{c} \in L^1\OFP$.
Hence, there exists a set $\tilde{\Omega} \subset \Omega$, $\PP(\tilde{\Omega}) = 1$, such that for any $\omega \in \tilde{\Omega}$
\begin{equation*}
\lim\limits_{n \to \infty} \frac{\lnplus{c(\theta_{n}\omega)}}{n} = 0, \quad \text{ and therefore } \quad
\limsup\limits_{n \to \infty} \frac{\ln{c(\theta_{n}\omega)}}{n} \le 0\,.
\end{equation*}
In addition,  for $t > 0$ it can be checked from the definition of $c$ and the cocycle property~\eqref{eq-cocycle} satisfied by $U^0_\w$ that
$ c(\theta_{t}\omega) \le  c(\theta_{\lfloor t \rfloor + 1}\omega)\, c(\theta_{\lfloor t \rfloor}\omega)$, where $\lfloor t \rfloor$ denotes the integer part of the real number $t$. From this which we conclude that
\begin{equation*}
\limsup\limits_{t \to \infty} \frac{\ln{c(\theta_{t}\omega)}}{t} \le 0
\end{equation*}
for any $\omega \in \tilde{\Omega}$. In order to proof the second inequality it is enough to notice that, again from~\ref{S2}, we deduce that $\lnplus{d} \in L^1\OFP$ and in this case
$ d(\theta_{t}\omega) \le  d(\theta_{\lfloor t \rfloor}\omega)+ d(\theta_{\lfloor t \rfloor + 1}\omega)$, which is an easy consequence of the definition of $d$. The other two inequalities follow from the application of the previous ones to the reversed flow $\widehat\theta_t\omega=\theta_{-t}\omega$.
\end{proof}
The next result shows that we have the same Lyapunov exponents, independent of the choice of the Banach space, $C$ or $L$.
\begin{proposition}
\label{prop:Lyapunov-exponents-equal}
Let $\Omega_1$ be as in \textup{Lemma~\ref{lm:zero}}.
\begin{enumerate}
\item[\textup{(1)}]
Assume that for some $\omega \in \Omega_1$, $u \in C$ and $\lambda \in [-\infty, \infty)$ there holds
\begin{equation*}
\lambda = \lim\limits_{t \to \infty} \frac{\ln{\normC{U^{(C)}_{\omega}(t)\, u}}}{t}\,.\text{ Then }
\lambda = \lim\limits_{t \to \infty} \frac{\ln{\normL{U^{(L)}_{\omega}(t)\, (Ju)}}}{t}.
\end{equation*}
\item[\textup{(2)}]
Assume that for some $\omega \in \Omega_1$, $u \in L$ and $\lambda \in [-\infty, \infty)$ there holds
\begin{equation*}
\lambda = \lim\limits_{t \to \infty} \frac{\ln{\normL{U^{(L)}_{\omega}(t) \,u}}}{t}
\,.\text{ Then }
\lambda = \lim\limits_{t \to \infty} \frac{\ln{\normC{U^{(L, C)}_{\omega}(t) \,u}}}{t}.
\end{equation*}
\end{enumerate}
\end{proposition}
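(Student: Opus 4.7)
The proof is a two-sided sandwich combining the three relations \eqref{eq:L-C-1}--\eqref{eq:L-C-2} among $U^{(C)}$, $U^{(L)}$, $U^{(L,C)}$ and $J$ with the norm estimate of Lemma~\ref{lm:estimate_L-to-C}(i). The factors $c(\theta_{t}\omega)$ and $1 + d(\theta_{t}\omega)$ arising there will be absorbed via Lemma~\ref{lm:zero}; indeed, since $c(\omega) \ge 1$ and $1 + d(\omega) \ge 1$, the ratios $\frac{\ln c(\theta_{t}\omega)}{t}$ and $\frac{\ln(1 + d(\theta_{t}\omega))}{t}$ are nonnegative for $t > 0$, so Lemma~\ref{lm:zero} upgrades to the two-sided statement
\[
\lim_{t \to \infty} \frac{\ln c(\theta_{t}\omega)}{t} = 0, \qquad
\lim_{t \to \infty} \frac{\ln(1 + d(\theta_{t}\omega))}{t} = 0,
\]
for every $\omega \in \Omega_{1}$, and a bounded shift of the argument (e.g.\ $\theta_{s-1}\omega$ with $s \to \infty$) does not affect these limits.

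\textbf{Part (1).} For the upper bound, \eqref{eq:L-C-1} and $\norm{J} = 2$ give
\[
\normL{U^{(L)}_{\omega}(t)(Ju)} = \normL{J\bigl(U^{(C)}_{\omega}(t)\,u\bigr)} \le 2\,\normC{U^{(C)}_{\omega}(t)\,u},
\]
so $\limsup_{t \to \infty} t^{-1}\ln \normL{U^{(L)}_{\omega}(t)(Ju)} \le \lambda$. For the matching lower bound, combine the factorization $U^{(C)}_{\omega}(s) = U^{(L,C)}_{\omega}(s) \circ J$ from \eqref{eq:L-C-2} with Lemma~\ref{lm:estimate_L-to-C}(i) applied to $Ju \in L$ and $t = s - 1 \ge 0$:
\[
\normC{U^{(C)}_{\omega}(s)\,u} \le c(\theta_{s-1}\omega)\bigl(1 + d(\theta_{s-1}\omega)\bigr)\,\normL{U^{(L)}_{\omega}(s-1)(Ju)} \quad (s \ge 1).
\]
Taking logarithms, dividing by $s - 1$, and invoking the preliminary observation together with the hypothesis $\lim_{s \to \infty} s^{-1}\ln \normC{U^{(C)}_{\omega}(s)\,u} = \lambda$ yields $\liminf_{s \to \infty} (s-1)^{-1} \ln \normL{U^{(L)}_{\omega}(s-1)(Ju)} \ge \lambda$, closing the sandwich. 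The case $\lambda = -\infty$ is already covered by the upper bound alone.

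\textbf{Part (2).} The argument is entirely symmetric. Lemma~\ref{lm:estimate_L-to-C}(i) reads, for $s \ge 1$,
\[
\normC{U^{(L,C)}_{\omega}(s)\,u} \le c(\theta_{s-1}\omega)\bigl(1 + d(\theta_{s-1}\omega)\bigr)\,\normL{U^{(L)}_{\omega}(s-1)\,u},
\]
which gives $\limsup_{s \to \infty} s^{-1}\ln \normC{U^{(L,C)}_{\omega}(s)\,u} \le \lambda$. Conversely, the identity $U^{(L)}_{\omega}(t) = J \circ U^{(L,C)}_{\omega}(t)$ for $t \ge 1$ in \eqref{eq:L-C-2}, together with $\norm{J} = 2$, gives $\normL{U^{(L)}_{\omega}(t)\,u} \le 2\,\normC{U^{(L,C)}_{\omega}(t)\,u}$, hence $\liminf_{t \to \infty} t^{-1}\ln \normC{U^{(L,C)}_{\omega}(t)\,u} \ge \lambda$. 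Combining the two bounds finishes the proof. I do not anticipate any real difficulty; the entire content consists in coupling the commutation identities \eqref{eq:L-C-1}--\eqref{eq:L-C-2} with Lemma~\ref{lm:estimate_L-to-C} and exploiting the temperedness of $c$ and $d$ supplied by Lemma~\ref{lm:zero} on $\Omega_{1}$.
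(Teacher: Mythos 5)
Your proof is correct and follows essentially the same route as the paper: a two-sided sandwich using the identities \eqref{eq:L-C-1}--\eqref{eq:L-C-2} with $\norm{J}=2$ for one bound, Lemma~\ref{lm:estimate_L-to-C}(i) combined with Lemma~\ref{lm:zero} for the other, and the $\liminf$/$\limsup$ inequalities to close the gap. The only (harmless) cosmetic difference is that you present the two bounds separately rather than in a single chain of inequalities, and you make explicit the observation that $c\ge 1$ and $1+d\ge 1$ turn the one-sided estimates of Lemma~\ref{lm:zero} into genuine limits.
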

\begin{proof}
Recall that for any real sequences $(a_m)_{m = 1}^{\infty}$, $(b_m)_{m = 1}^{\infty}$ with $\limsup\limits_{m \to \infty}a_m < \infty$ and $\limsup\limits_{m \to \infty}b_m < \infty$ there holds
\begin{align}
\label{eq:liminf1}
\liminf\limits_{m \to \infty} {(a_m + b_m)} & {} \le \limsup\limits_{m \to \infty} a_m + \liminf\limits_{m \to \infty} b_m,
\\
\label{eq:liminf2}
\limsup\limits_{m \to \infty} {(a_m + b_m)} & {} \le \limsup\limits_{m \to \infty} a_m + \limsup\limits_{m \to \infty} b_m\,.
\end{align}
\par
(1) From~\eqref{eq:L-C-2}, Lemma~\ref{lm:estimate_L-to-C}(i), \eqref{eq:liminf1}, Lemma~\ref{lm:zero} and~\eqref{eq:L-C-1}   we deduce the following chain of inequalities
\begin{align*}
\lambda & = \lim_{t \to \infty} \frac{\ln{\normC{U^{(C)}_{\omega}(t+ 1) \,u}}}{t + 1}  
 =\lim\limits_{t \to \infty} \frac{\ln{\normC{U^{(L, C)}_{\omega}(t + 1)\,(Ju)}}}{t}\\
 & \leq  \limsup_{t \to \infty} \frac{\ln{[c(\theta_{t}\omega)\,(1+d(\theta_{t}\omega))]}}{t} + \liminf_{t \to \infty} \frac{\ln{\normL{U^{(L)}_{\omega}(t) \,(J u)}}}{t}\\
& \le \liminf_{t \to \infty} \frac{\ln{\normL{U^{(L)}_{\omega}(t) \,(J u)}}}{t} \le \limsup\limits_{t \to \infty} \frac{\ln{\normL{U^{(L)}_{\omega}(t) \,(J u)}}}{t}\\
 & = \limsup_{t \to \infty} \frac{\ln{\normL{(J \circ U^{(C)}_{\omega}(t))\, u}}}{t} \le \lim\limits_{t \to \infty} \frac{\ln{\normC{U^{(C)}_{\omega}(t)\,u}}}{t} = \lambda\,,
\end{align*}
which finishes the proof of (1).\par\smallskip
(2) Similarly, from~\eqref{eq:L-C-2}, Lemma~\ref{lm:estimate_L-to-C}(i),~\eqref{eq:liminf2} and~Lemma~\ref{lm:zero} we obtain
\begin{align*}
 \lambda & = \lim_{t\to \infty} \frac{\ln{\normL{U^{(L)}_{\omega}(t) \,u}}}{t}
= \lim_{t \to \infty} \frac{\ln{\normL{(J \circ U^{(L, C)}_{\omega}(t+1)) \,u}}}{t+1}\\
&\le \liminf_{t \to \infty} \frac{\ln{\normC{U^{(L, C)}_{\omega}(t+1) \,u}}}{t+1} \le \limsup_{t \to \infty} \frac{\ln{\normC{U^{(L, C)}_{\omega}(t+1) \,u}}}{t+1}\\
& \le \limsup_{t \to \infty}\frac{\ln{[c(\theta_{t}\omega)\,(1+d(\theta_{t}\omega))]}}{t} + \lim_{t \to \infty} \frac{\ln{\normL{U^{(L)}_{\omega}(t) \, u}}}{t+1}\\
& \le \lim_{t \to \infty} \frac{\ln{\normL{U^{(L)}_{\omega}(t)  \,u}}}{t+1}
=  \lim_{t \to \infty} \frac{\ln{\normL{U^{(L)}_{\omega}(t) u}}}{t} = \lambda\,,
\end{align*}
and (2) holds, as stated.
\end{proof}
As a consequence, the top Lyapunov exponents for $U^{(C)}_{\omega}$ and $U^{(L)}_{\omega}$ coincide.
\begin{proposition}\label{prop:top-equal}
Let $\lambda^{(C)}_{\mathrm{top}} \in [-\infty, \infty)$ and $\lambda^{(L)}_{\mathrm{top}} \in [-\infty, \infty)$ denote the top Lyapunov exponents for $U^{(C)}_{\omega}$ and $U^{(L)}_{\omega}$ respectively,
i.e. \begin{equation}\label{for:topLC}
\lim\limits_{t \to \infty} \frac{\ln{\norm{U^{(C)}_{\omega}(t)}}}{t} = \lambda^{(C)}_{\mathrm{top}}, \quad \lim\limits_{t \to \infty} \frac{\ln{\norm{U^{(L)}_{\omega}(t)}}}{t} = \lambda^{(L)}_{\mathrm{top}},
\end{equation}
for $\PP$-a.e.\ $\omega \in \Omega$.
Then $\,\lambda^{(C)}_{\mathrm{top}} = \lambda^{(L)}_{\mathrm{top}}$.
\end{proposition}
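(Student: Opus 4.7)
The plan is to sandwich $\norm{U^{(C)}_\omega(t)}$ and $\norm{U^{(L)}_\omega(t)}$ by one another up to multiplicative prefactors which grow subexponentially along the orbit $(\theta_t\omega)_t$, then take logarithms, divide by $t$, and pass to the limit, relying on Lemma~\ref{lm:zero} to absorb the correction. The ingredients are exactly the factorizations~\eqref{eq:L-C-2}, the identity~\eqref{eq:L-C-L}, the bound~\eqref{normU)L,C)(1)}, and Lemma~\ref{lm:estimate_L-to-C}(i); structurally the argument is the operator-norm analogue of Proposition~\ref{prop:Lyapunov-exponents-equal}.

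For $\lambda^{(C)}_{\mathrm{top}} \le \lambda^{(L)}_{\mathrm{top}}$, I would combine $U^{(C)}_\omega(t+1) = U^{(L,C)}_\omega(t+1)\circ J$ from~\eqref{eq:L-C-2} with $\norm{J}=2$ and the pointwise estimate of Lemma~\ref{lm:estimate_L-to-C}(i) applied to $Ju$ to derive the operator-norm bound
\begin{equation*}
\norm{U^{(C)}_\omega(t+1)} \le 2\, c(\theta_t\omega)\,(1+d(\theta_t\omega))\, \norm{U^{(L)}_\omega(t)}\,.
\end{equation*}
Choose a $\theta$-invariant full-measure set $\Omega^{\ast} \subset \Omega_1$ on which both limits in~\eqref{for:topLC} are attained. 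For $\omega \in \Omega^{\ast}$, take logarithms, divide by $t+1$, and pass to $\limsup_{t\to\infty}$: by Lemma~\ref{lm:zero} the prefactor contributes at most $0$, yielding $\lambda^{(C)}_{\mathrm{top}} \le \lambda^{(L)}_{\mathrm{top}}$.

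For the converse I would use the factorization $U^{(L)}_\omega(t) = J \circ U^{(C)}_{\theta_1\omega}(t-1) \circ U^{(L,C)}_\omega(1)$ for $t \ge 1$, obtained by composing~\eqref{eq:L-C-2} with~\eqref{eq:L-C-L}. Together with~\eqref{normU)L,C)(1)} and $\norm{J}=2$ this gives
\begin{equation*}
\norm{U^{(L)}_\omega(t)} \le 2\, c(\omega)\,(1+d(\omega))\, \norm{U^{(C)}_{\theta_1\omega}(t-1)}\,.
\end{equation*}
Since $c(\omega),d(\omega)$ are fixed finite constants once $\omega$ is chosen, dividing by $t$ and letting $t\to\infty$ yields $\lambda^{(L)}_{\mathrm{top}} \le \lambda^{(C)}_{\mathrm{top}}$, provided $\theta_1\omega$ also realizes the $U^{(C)}$-limit in~\eqref{for:topLC}. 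The only delicate point---the main (and minor) obstacle---is the book-keeping of full-measure sets: one needs a single $\theta$-invariant subset $\Omega^{\ast}$ of full measure on which Lemma~\ref{lm:zero} holds and both $U^{(C)}$- and $U^{(L)}$-limits in~\eqref{for:topLC} are achieved, so that the evaluation at $\theta_1\omega$ above is legitimate; this is routine since each condition holds on a $\theta$-invariant set of full measure, and their intersection is again invariant and of full measure.
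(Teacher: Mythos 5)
Your proposal is correct and follows essentially the same route as the paper: both directions rest on the factorizations \eqref{eq:L-C-2} and \eqref{eq:L-C-L} together with the bound \eqref{normU)L,C)(1)} (equivalently Lemma~\ref{lm:estimate_L-to-C}(i)) and $\norm{J}=2$, with Lemma~\ref{lm:zero} absorbing the subexponential prefactor in the first inequality and the invariance of the full-measure set justifying the evaluation at $\theta_1\omega$ in the second. The paper's proof is the same argument, so there is nothing to add.
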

\begin{proof}
Let $\omega \in \Omega_1$ satisfying~\eqref{for:topLC}. From~\eqref{eq:L-C-2},  $U_\w^{(L,C)}(t+1)=U_{\theta_t\w}^{(L,C)}(1)\circ U_\w^{(L)}(t)$, \eqref{normU)L,C)(1)} and $\norm {J}=2$  we deduce that
\[\norm{U_\w^{(C)}(t+1)}\leq 2\,\norm{U_\w^{(L,C)}(t+1)}\leq 2\, c(\theta_t\w)\,(1+d(\theta_t\w))\, \norm{U_{\omega}^{(L)}(t)}\,,\]
which together with Lemma~\eqref{lm:zero} provides $\lambda^{(C)}_{\mathrm{top}} \leq
\lambda^{(L)}_{\mathrm{top}}$. \par\smallskip
On the other hand, from~\eqref{eq:L-C-2},~\eqref{eq:L-C-L}, \eqref{normU)L,C)(1)} and $\norm{J}=2$ we obtain
\[ \norm{U_\w^{(L)}(t+1)}\leq 2\, \norm{U_{\theta_{1}\w}^{(C)}(t)}\,\norm{ U_{\w}^{(L,C)}(1)}\leq 2\, c(\w)\,(1+d(\w))\, \norm{U_{\theta_{1}\omega}^{(C)}(t)}\,,\]
which together with the invariance of the set satisfying~\eqref{for:topLC} implies that $\lambda^{(L)}_{\mathrm{top}} \leq
\lambda^{(C)}_{\mathrm{top}}$, and finishes the proof.
\end{proof}
\begin{nota}\label{topLc-equal}
In view of the above we will denote the common value of $\lambda^{(C)}_{\mathrm{top}} = \lambda^{(L)}_{\mathrm{top}}$ by $\lambda_{\mathrm{top}}$.
\end{nota}
From now on, we assume that both $\Phi^{(C)}$ and $\Phi^{(L)}$ admit an Oseledets decomposition.
From Proposition~\ref{prop:limit-everywhere} we can find  a common invariant set $\Omega_0 \subset \Omega_1$ with $\PP(\Omega_0) = 1$, such that for any $\omega \in \Omega_0$ and each nonzero $u \in C$ the limit
\begin{equation*}
\lim_{t \to \infty} \frac{\normC{U^{(C)}_{\omega}(t) \,u}}{t}
\end{equation*}
exist and equals either $-\infty$ or some Lyapunov exponent $\lambda_i^{(C)}$, and for each nonzero $u \in L$ the limit
\begin{equation*}
\lim_{t \to \infty} \frac{\normL{U^{(L)}_{\omega}(t)\, u}}{t}
\end{equation*}
exist and equals either $-\infty$ or some Lyapunov exponent~$\lambda_i^{(L)}$.
\begin{theorem}
\label{thm:Lyapunov-equal}  Assume that both $\Phi^{(C)}$ and $\Phi^{(L)}$  admit an Oseledets decomposition. For any $i \in \{1, \ldots, k\}$ in case \textup{(I)}, or any $i \in \NN$ in case \textup{(II)} there holds:
\begin{enumerate}
\item[\textup{(a)}]
$\lambda^{(C)}_{i} = \lambda^{(L)}_{i}$.
\smallskip
\item[\textup{(b)}]
$F^{(C)}_i(\omega) = J^{-1} (F^{(L)}_i(\omega))$, $\PP$-a.e.~on $\Omega$.
\smallskip
\item[\textup{(c)}]
$U^{(L,C)}_{\omega}(t)\, F^{(L)}_i(\omega) \subset F^{(C)}_i(\theta_{t}\omega))$ for $t \ge 1$, $\PP$-a.e.~on $\Omega$.
\smallskip
\item[\textup{(d)}]
$F^{(C)}_{\infty}(\omega) = J^{-1} (F^{(L)}_{\infty}(\omega))$, $\PP$-a.e.~on $\Omega$.
\smallskip
\item[\textup{(e)}]
$U^{(L,C)}_{\omega}(t)\, F^{(L)}_{\infty}(\omega) \subset F^{(C)}_{\infty}(\theta_{t}\omega))$ for $t \ge 1$, $\PP$-a.e.~on $\Omega$.
\end{enumerate}
\end{theorem}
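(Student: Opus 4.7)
The backbone of the argument is the following spectral characterization, valid on the common full-measure invariant set $\Omega_0 \subset \Omega_1$ where both Oseledets decompositions are in force: by Proposition~\ref{prop:limit-everywhere} together with clauses (d)--(f) of Definition~\ref{defi:Oseledets}, a nonzero $u$ belongs to $F_i(\omega)$ iff its Lyapunov exponent $\lim_{t \to \infty} t^{-1}\ln\norm{U_\omega(t)u}$ is not among $\{\lambda_1,\ldots,\lambda_i\}$ (equivalently, it is $\le \lambda_{i+1}$ or equals $-\infty$), and $u \in F_\infty(\omega)$ iff this exponent is $-\infty$. All five claims will be reduced to this characterization via Proposition~\ref{prop:Lyapunov-exponents-equal}.

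For part (a), I would establish both inclusions between the two spectra. Given a nonzero $u \in E_i^{(C)}(\omega)$, Proposition~\ref{prop:Lyapunov-exponents-equal}(1) yields $\lim t^{-1}\ln\normL{U^{(L)}_\omega(t)(Ju)} = \lambda_i^{(C)}$; since $J$ is injective and $\lambda_i^{(C)}$ is finite, this forces $\lambda_i^{(C)}$ to be a Lyapunov exponent of $\Phi^{(L)}$. Conversely, for $0 \neq u \in E_j^{(L)}(\omega)$ set $v := U^{(L,C)}_\omega(1) u \in C$; Proposition~\ref{prop:Lyapunov-exponents-equal}(2) gives that the $C$-rate of $U^{(L,C)}_\omega(t) u$ equals $\lambda_j^{(L)}$, which is finite, so $v \ne 0$ (otherwise the identity $U^{(L,C)}_\omega(t) = U^{(C)}_{\theta_1\omega}(t-1) \circ U^{(L,C)}_\omega(1)$ from \eqref{eq:L-C-L} would force $U^{(L,C)}_\omega(t)u \equiv 0$ for $t \ge 1$ and hence a rate of $-\infty$); the same identity shows $v$ has $\Phi^{(C)}$-rate $\lambda_j^{(L)}$ based at $\theta_1\omega$, so $\lambda_j^{(L)}$ is a Lyapunov exponent of $\Phi^{(C)}$. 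The two spectra thus coincide as sets; as each is listed in strictly decreasing order the indexings match, and the common cardinality of the spectrum selects the same case~(I) or~(II).

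Parts (b) and (d) follow directly from the spectral characterization. For any nonzero $u \in C$, both the $C$-Lyapunov exponent of $u$ and the $L$-Lyapunov exponent of $Ju$ exist by Proposition~\ref{prop:limit-everywhere} and are equal by Proposition~\ref{prop:Lyapunov-exponents-equal}(1); invoking part~(a), the exponent of $u$ avoids $\{\lambda_1,\ldots,\lambda_i\}$ iff the exponent of $Ju$ avoids $\{\lambda_1,\ldots,\lambda_i\}$, giving $F_i^{(C)}(\omega) = J^{-1}(F_i^{(L)}(\omega))$. Replacing ``avoids $\{\lambda_1,\ldots,\lambda_i\}$'' by ``equals $-\infty$'' yields (d).

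Parts (c) and (e) rely on the semigroup-type identity $U^{(C)}_{\theta_t\omega}(s) \circ U^{(L,C)}_\omega(t) = U^{(L,C)}_\omega(t+s)$ for $t \ge 1$, $s \ge 0$, which follows from the fact that $U^{(L,C)}_\omega(t)u \in C$ serves as an initial datum for the $C$-semiflow based at $\theta_t\omega$ (a cocycle consequence of \eqref{5.cocycle}). Given this, for $0 \neq u \in F_i^{(L)}(\omega)$ and $t \ge 1$, set $v := U^{(L,C)}_\omega(t)u$ and rewrite
\begin{equation*}
\lim_{s\to\infty} \frac{\ln\normC{U^{(C)}_{\theta_t\omega}(s)\, v}}{s} = \lim_{s\to\infty} \frac{\ln\normC{U^{(L,C)}_\omega(t+s)\,u}}{t+s}\cdot\frac{t+s}{s},
\end{equation*}
which by Proposition~\ref{prop:Lyapunov-exponents-equal}(2) equals the $L$-Lyapunov exponent of $u$, namely a value $\le \lambda_{i+1}^{(L)} = \lambda_{i+1}^{(C)}$ (or $-\infty$); hence $v \in F_i^{(C)}(\theta_t\omega)$. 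The identical argument, with ``$\le \lambda_{i+1}$'' replaced by ``$=-\infty$'', yields (e). The main subtleties I expect to confront are the spectral characterization of the filtration and the matching of cases~(I)/(II) in (a); everything else amounts to bookkeeping with Proposition~\ref{prop:Lyapunov-exponents-equal} and the identities relating $J$, $U^{(C)}$, $U^{(L)}$ and $U^{(L,C)}$.
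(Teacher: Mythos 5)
Your proposal is correct, and it reorganizes the argument in a way that genuinely differs from the paper in part (a). The paper proves (a), (b), (c) simultaneously by induction on $i$: the equality $\lambda_i^{(C)}=\lambda_i^{(L)}$ at step $i$ is obtained from the two one\nobreakdash-sided statements $\lambda_i^{(C)}=\lambda_m^{(L)}$ for some $m\ge i$ and $\lambda_i^{(L)}=\lambda_{m'}^{(C)}$ for some $m'\ge i$, and the localization $m,m'\ge i$ is exactly what the induction hypotheses $F_j^{(C)}(\omega)=J^{-1}(F_j^{(L)}(\omega))$ and $U^{(L,C)}_{\omega}(t)F_j^{(L)}(\omega)\subset F_j^{(C)}(\theta_t\omega)$ for $j<i$ are used for. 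You instead prove (a) for all indices at once by showing the two spectra coincide \emph{as sets}: you push a nonzero vector of $E_i^{(C)}(\omega)$ forward with $J$, and a nonzero vector of $E_j^{(L)}(\omega)$ forward with $U^{(L,C)}_\omega(1)$, invoking Proposition~\ref{prop:limit-everywhere} to recognize the resulting finite rate as a Lyapunov exponent of the other system; your check via \eqref{eq:L-C-L} that $U^{(L,C)}_\omega(1)u\neq 0$ is precisely the point that needs saying, and you say it. Since both spectra are strictly decreasing enumerations, equality of sets forces equality of the indexed exponents and, as a bonus, makes explicit that the two systems fall into the same case (I)/(II) with the same $k$ --- something the paper leaves implicit. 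The price is that you must first isolate the ``spectral characterization'' of $F_i(\omega)$ and $F_\infty(\omega)$ by forward growth rates, but this is already contained in the proof of Proposition~\ref{prop:limit-everywhere}, and once it is stated your parts (b)--(e) coincide in substance with the paper's (which writes them out only for $i=1$, using the same identity $U^{(L,C)}_\omega(t+s)=U^{(C)}_{\theta_t\omega}(s)\circ U^{(L,C)}_\omega(t)$ and Proposition~\ref{prop:Lyapunov-exponents-equal}, and declares the general index analogous). In short: the paper's induction buys the localization $m\ge i$ directly; your set\nobreakdash-theoretic comparison of spectra buys a non\nobreakdash-inductive, more modular proof at the cost of stating the characterization of the filtration explicitly.
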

\begin{proof}
We prove first (a), (b) and (c) by~induction. From Proposition~\ref{prop:top-equal} we have $\lambda_1^{(C)}=\lambda_1^{(L)}=\lambda_{\mathrm{top}}$ so that (a) holds for $i=1$. From the definition of $F_1^{(C)}(\w)$,  if $u\in F_1^{(C)}(\w)$ we deduce that
$\lim\limits_{t \to \infty}  (1/t) \ln{\normC{U_\omega^{(C)}(t)\,u}}< \lambda_1^{(C)}=\lambda_1^{(L)}\,$ and hence, Proposition~\ref{prop:Lyapunov-exponents-equal}(1) provides $\lim\limits_{t \to \infty}  (1/t) \ln{\normL{U_\omega^{(L)}(t)\,(Ju)}}<\lambda_1^{(L)}$, that is, $Ju \in F_1^{(L)}(\w)$.
If $Ju \in F_1^{(L)}(\w)$, then the definition of $F_1^{(L)}(\w)$ gives that
$\lim\limits_{t \to \infty}  (1/t) \ln{\normL{U_\omega^{(L)}(t)\,(Ju)}} < \lambda_1^{(L)} = \lambda_1^{(C)}$, and hence, Proposition~\ref{prop:Lyapunov-exponents-equal}(2) together with~\eqref{eq:L-C-2} imply $\lim\limits_{t \to \infty}  (1/t) \ln{\normC{U_\omega^{(L)}(t)\, u}} < \lambda_1^{(C)}$, that is, $u \in F_1^{(L)}(\w)$. Therefore, statement (b) holds for $i = 1$.
Analogously,  for any $\hat u\in F_1^{(L)}(\w)$ we have that
$\lim\limits_{t \to \infty}  (1/(s+t)) \ln {\normL{U_\omega^{(L)}(s+t)\,\hat u}}< \lambda_1^{(L)}=\lambda_1^{(C)}\,$ and from Proposition~\ref{prop:Lyapunov-exponents-equal}(2) we deduce that
$\lim\limits_{t \to \infty} (1/(s+t)) \ln{\normC{U_\omega^{(L,C)}(s+t)\,\hat u}<\lambda_1^{(C)}}$,
which together with  $U_\omega^{(L,C)}(s+t)= U_{\theta_t\omega}^{(C)}(s)\circ U^{(L,C)}_\omega(t)$ show that
\[ \lim_{s\to\infty} \frac{\ln{\normC{U_{\theta_{t}\omega}^{(C)}(s)\circ U_\omega^{(L,C)}(t)\,\hat u}}}{s} < \lambda_1^{(C)}\,,\]
that is, $U_\omega^{(L,C)}(t)\, u\in F^{(C)}_1(\theta_{t}\omega)$ and (c) holds for $i=1$.
\par\smallskip
Next, let $i>2$ be such that $\lambda_j^{(C)} = \lambda_j^{(L)}$ and $F^{(C)}_j(\omega) = J^{-1} (F^{(L)}_j(\omega))$ for all $\omega \in \Omega_0$ and $j = 1, 2, \dots, i-1$.  For any $\omega \in \Omega_0$ and $u \in F_{i - 1}^{(C)}(\omega) \setminus F_i^{(C)}(\omega)$ there holds $\lim\limits_{t \to \infty} (1/t) \ln{\normC{U^{(C)}_{\omega}(t) \,u}} = \lambda^{(C)}_{i}$ and from Proposition~\ref{prop:Lyapunov-exponents-equal}(1) we deduce that $\lim\limits_{t \to \infty} (1/t) \ln{\normL{U^{(L)}_{\omega}(t) (Ju)}} = \lim\limits_{t \to \infty} (1/t) \ln{\normC{U^{(C)}_{\omega}(t) \,u}} = \lambda^{(C)}_{i}$.  Moreover, by induction hypothesis on (b), $Ju \in F_{i - 1}^{(L)}(\omega)$, hence $\lim\limits_{t \to \infty} (1/t) \ln{\normL{U^{(L)}_{\omega}(t) (Ju)} }= \lambda^{(L)}_{m}$ for some $m \ge i$.  Therefore
\begin{equation}
\label{eq:auxiliary-1}
\lambda^{(C)}_{i} = \lambda^{(L)}_{m} \quad \text{for some } m \ge i.
\end{equation}
For any $\hat{u} \in F_{i - 1}^{(L)}(\omega) \setminus F_{i}^{(L)}(\omega)$ there holds $\lim\limits_{t \to \infty} (1/t) \ln{\normL{U^{(L)}_{\omega}(t)\, \hat{u}}} = \lambda^{(L)}_{i}$.  By Proposition~\ref{prop:Lyapunov-exponents-equal}(2), $\lim\limits_{t \to \infty} (1/t) \ln{\normC{U^{(L, C)}_{\omega}(t) \,\hat{u}}} = \lim\limits_{t \to \infty} (1/t) \ln{\normL{U^{(L)}_{\omega}(t)\, \hat{u}}} = \lambda^{(L)}_{i}$.  On the other hand, by induction hypothesis on (c), $U^{(L, C)}_{\omega}(t) \,\hat{u} \in F_{i - 1}^{(C)}(\theta_{t}\omega)$ for $t \ge 1$, consequently $\lim\limits_{t \to \infty} (1/t) \ln{\normC{U^{(L, C)}_{\omega}(t)\, \hat{u}}} = \lambda^{(C)}_{m'}$ for some $m' \ge i$.  Therefore
\begin{equation*}
\lambda^{(L)}_{i} = \lambda^{(C)}_{m'} \quad \text{for some } m' \ge i\,,
\end{equation*}
which together with~\eqref{eq:auxiliary-1}  give that $\lambda^{(L)}_{i} = \lambda^{(C)}_{i}$, as claimed.
\par\smallskip
Once the coincidence of the Lyapunov exponents $\lambda_i^{(C)}=\lambda_i^{(L)}$ is shown, the proof of (b) and (c) for the index  $i$ is completely analogous to the above proof for $i=1$ and it is omitted.
Similarly, parts (d) and (e) are due to the fact that for any $\omega \in \Omega_0$, $F_{\infty}^{(C)}(\omega)$ (resp.~$F_{\infty}^{(L)}(\omega)$) is characterized as the set of those $u \in C$ for which $\lim\limits_{t \to \infty} (1/t) \ln{\normC{U^{(C)}_{\omega}(t) \, u}} = - \infty$ (resp.~of those $\hat{u} \in L$ for which $\lim\limits_{t \to \infty} (1/t) \ln{\normL{U^{(L)}_{\omega}(t) \,\hat{u}}} = - \infty$), and again an application of Proposition~\ref{prop:Lyapunov-exponents-equal}.
\end{proof}
\begin{theorem}
\label{thm:E_i-equal}
Assume that both $\Phi^{(C)}$ and $\Phi^{(L)}$  admit an Oseledets decomposition. For any $i \in \{1, \ldots, k\}$ in case \textup{(I)}, or any $i \in \NN$ in case \textup{(II)} there holds:
\begin{equation*}
J(E^{(C)}_i(\omega)) = E^{(L)}_i(\omega),\text{ $\PP$-a.e.~on $\Omega$.}
\end{equation*}
\end{theorem}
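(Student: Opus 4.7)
The plan is to characterize the fibres $E_i$ intrinsically by full orbits (Proposition~\ref{prop:full-orbit}): a nonzero element $v$ lies in $E_i(\omega)$ exactly when there exists a full orbit $\tilde v\colon\R\to X$ through $(\omega,v)$ with $\lim_{t\to\pm\infty}(1/t)\ln\norm{\tilde v(t)}=\lambda_i$. The two inclusions will then be obtained by transporting full orbits between $C$ and $L$ through $J$ and $U^{(L,C)}_\omega(1)$, using the intertwinings~\eqref{eq:L-C-1} and~\eqref{eq:L-C-2}, the bound~\eqref{normU)L,C)(1)}, and the tempering Lemma~\ref{lm:zero}; the coincidence of the Lyapunov exponents established in Theorem~\ref{thm:Lyapunov-equal}(a) makes the indexing consistent.

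For the forward inclusion $J(E^{(C)}_i(\omega))\subset E^{(L)}_i(\omega)$, I would take a nonzero $u\in E^{(C)}_i(\omega)$ and a full orbit $\tilde u\colon\R\to C$ through $(\omega,u)$; the relation~\eqref{eq:L-C-1} then makes $\tilde v:=J\circ\tilde u$ a full $\Phi^{(L)}$-orbit through $(\omega,Ju)$. The positive-time rate $\lambda^{(L)}_i$ comes straight from Proposition~\ref{prop:Lyapunov-exponents-equal}(1). For the negative-time rate I would combine two one-sided bounds: the crude inequality $\normL{\tilde v(t)}\le 2\normC{\tilde u(t)}$ gives, after dividing by $t<0$, the estimate $\liminf_{t\to-\infty}(1/t)\ln\normL{\tilde v(t)}\ge\lambda^{(C)}_i$, while writing $\tilde u(t)=U^{(L,C)}_{\theta_{t-1}\omega}(1)\tilde v(t-1)$ and applying~\eqref{normU)L,C)(1)} together with Lemma~\ref{lm:zero} yields $\limsup_{t\to-\infty}(1/t)\ln\normL{\tilde v(t)}\le\lambda^{(C)}_i$. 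Proposition~\ref{prop:full-orbit} then places $Ju$ in $E^{(L)}_i(\omega)$.

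For the reverse inclusion $E^{(L)}_i(\omega)\subset J(E^{(C)}_i(\omega))$, I would take a nonzero $u\in E^{(L)}_i(\omega)$ with full orbit $\tilde u_L\colon\R\to L$ and construct the preimage by a one-step lift: set $u':=U^{(L,C)}_{\theta_{-1}\omega}(1)\tilde u_L(-1)\in C$. Relation~\eqref{eq:L-C-2} gives $Ju'=U^{(L)}_{\theta_{-1}\omega}(1)\tilde u_L(-1)=\tilde u_L(0)=u$. A natural candidate full $\Phi^{(C)}$-orbit through $(\omega,u')$ is $\tilde u'_C(t):=U^{(L,C)}_{\theta_{t-1}\omega}(1)\tilde u_L(t-1)$; its cocycle property follows by a routine check using~\eqref{eq:L-C-L} and the $L$-cocycle for $\tilde u_L$, and by~\eqref{eq:L-C-2} one has $J\circ\tilde u'_C=\tilde u_L$. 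The asymptotics of $\tilde u'_C$ at $\pm\infty$ are then controlled by the mirror argument to the one above: $\normL{\tilde u_L(t)}\le 2\normC{\tilde u'_C(t)}$ handles one side and $\normC{\tilde u'_C(t)}\le c(\theta_{t-1}\omega)(1+d(\theta_{t-1}\omega))\normL{\tilde u_L(t-1)}$ the other, with Proposition~\ref{prop:Lyapunov-exponents-equal}(2) governing the forward-time rate. A final application of Proposition~\ref{prop:full-orbit} gives $u'\in E^{(C)}_i(\omega)$, whence $u=Ju'\in J(E^{(C)}_i(\omega))$.

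The main technical obstacle, in both directions, is the negative-time asymptotics: the intertwinings only give one-sided norm comparisons between the $C$- and $L$-orbits, so one cannot directly transfer a two-sided Lyapunov rate from one space to the other. The trick is to pair each direct inequality with the complementary bound obtained by passing through $U^{(L,C)}_\omega(1)$ and to absorb the operator norms via the tempering of $c$ and $d$ in Lemma~\ref{lm:zero}; this is what ultimately squeezes the logarithmic rate to exactly $\lambda_i$ on each negative semiorbit and closes the argument.
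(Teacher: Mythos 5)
Your proposal is correct and follows essentially the same route as the paper: both directions rest on the full/negative-orbit characterization of $E_i$ (Proposition~\ref{prop:full-orbit}), with the backward-time rate squeezed between the crude bound $\normL{J v}\le 2\normC{v}$ and the one-step estimate through $U^{(L,C)}(1)$ tempered by Lemma~\ref{lm:zero}; your ``one-step lift'' $u'=U^{(L,C)}_{\theta_{-1}\omega}(1)\tilde u_L(-1)$ is exactly the paper's observation that the second component $\widehat u_2(s)$ of the $L$-orbit is already continuous and satisfies $J\widehat u_2(s)=\widehat u(s)$.
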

\begin{proof}
Take $\w\in\Omega_0\subset \Omega_1$ and $u \in E_{i}^{(C)}(\omega) \setminus \{0\}$.
From Proposition~\ref{prop:full-orbit} let $\widetilde{u} \colon (-\infty, 0] \to C$ be a negative semiorbit for $\Phi^{(C)}$ with $\widetilde{u}(0) = u$ such that
\[
\lim_{s\to-\infty}\frac{\ln{\normC{\widetilde u(s)}}}{s}=\lambda_i\,.
\]
 It is straightforward that $\widehat{u} \colon (-\infty, 0] \to L$, $s\mapsto J(\widetilde u(s))$ is a negative semiorbit for $\Phi^{(L)}$ with $\widehat u(0) = Ju$. Moreover, from $\normL{\widehat u(s)}\leq 2\,\normC{\widetilde u(s)}$ we have
\begin{equation}\label{eq:liminf}
\liminf\limits_{s \to -\infty} \frac{\ln{\normL{\widehat u(s))}}}{s} \ge  \lambda_{i}.
\end{equation}
In addition, from $\widetilde u(s)(t)=z(t+1,\theta_{s-1}\omega,\widetilde u(s-1))$ for $t\in[-1,0]$
and Lemma~\ref{lm:estimate_pointwise} we deduce that
\begin{equation}\label{eq:normC-L}
\normC{\widetilde u(s)}\leq c(\theta_{s-1}\omega)(1+ d(\theta_{s-1}\omega)) \normL{\widehat u(s-1)}\,,
\end{equation}
which together with Lemma~\ref{lm:zero} provides
\begin{equation*}
\lambda_{i}\geq \limsup\limits_{s \to -\infty} \frac{\ln{\normL{\widehat u(s-1)}}}{s}=
\limsup\limits_{s \to -\infty} \frac{\ln{\normL{\widehat u(s)}}}{s} \,.
\end{equation*}
This inequality combined with~\eqref{eq:liminf} shows the existence of the limit
\begin{equation*}
\lim\limits_{s \to -\infty} \frac{\ln{\normL{\widehat u (s))}}}{s} = \lambda_{i}\,,
\end{equation*}
and since from Proposition~\ref{prop:Lyapunov-exponents-equal}(1) we have $\lim\limits_{t \to \infty} (1/t) \ln{\normL{U^{(L)}_{\omega}(t)\, (Ju)}}=\lambda_i$, we deduce that $Ju \in E^{(L)}_{i}(\omega)$, that is,  $J(E^{(C)}_{i}(\omega)) \subset E^{(L)}_{i}(\omega)$ is shown.\par
Next, take a nonzero vector $\hat u=(\hat u_1,\hat u_2)\in E_i^{(L)}(\w)$. Again, Proposition~\ref{prop:full-orbit} provides $\widehat u\colon (-\infty,0]\to L$ a negative semiorbit  for $\Phi^{(L)}$ with $\widehat u(0)=\hat u$ such that $\lim\limits_{s\to-\infty} (1/s)\ln{\normL{\widehat u(s)}}=\lambda_i$.
Since $\widehat u (s)=U_{\theta_{-1}\omega}^{(L)}(1)\,\widehat u(s-1)=(\widehat u_1(s),\widehat u_2(s))$,
we deduce that $\widehat u_2(s)\in C$, $J\widehat u_2(s)=\widehat u(s)$ for each $s\leq 0$, and  $\widetilde u\colon (-\infty,0]\to C$, $s\mapsto \widehat u_2(s)$ is a negative semiorbit for $\Phi^{(C)}$ with $\widetilde u(0)=\hat u_2$.
As before, from $\hat u\in E_i^{(L)}(\w)$, $\normL{\widehat u(s)}\leq 2\,\normC{\widetilde u(s)}$, the corresponding equality~\eqref{eq:normC-L} and Lemma~\ref{lm:zero} we deduce that
\[
\lambda_i\geq \limsup_{s\to-\infty}
\frac{\ln{\normC{\widetilde u(s)}}}{s}\geq \liminf_{s\to-\infty}\frac{\ln{\normC{\widetilde u(s)}}}{s}\geq \lambda_i\,, \text{ i.e. } \lambda_i=\lim\limits_{s\to-\infty} \frac{\ln{\normC{\widetilde u(s)}}}{s},
\]
 which together with $\lambda_i = \lim\limits_{t \to \infty} (1/t)\ln{\normC{U^{(L, C)}_{\omega}(t) \,\hat u}}= \lim\limits_{t \to \infty} (1/t) \ln{\normC{U^{(C)}_{\omega}(t) \,\hat u_2}}$,  show that  $\hat u_2\in E_i^{(C)}(\w)$, that is, $\hat u= J\hat u_2\in J(E_i^{(C)}(\w))$, which finishes the proof.
\end{proof}

\end{document}